\def\rmd{\mathrm{d}}
\def\rmT{\mathrm{T}}
\def\rme{\mathrm{e}}
\def\sinc{\mathrm{sinc}}
\def\supp{\text{supp}}
\def\GGG{\mathcal{G}}
\def\NN{\mathbb{N}}
\def\RR{\mathbb{R}}
\def\ZZ{\mathbb{Z}}
\def\wphi{\widehat{\phi}}
\def\ps#1#2#3{\raisebox{-0.75em}{\!\!\!\includegraphics[scale=0.375]{pseudo-spline#1#2#3}\!\!\!}}
\def\bfdelta{{\boldsymbol{\delta}}}
\newcommand{\bfa}{{\boldsymbol{a}}}
\newcommand{\bfb}{{\boldsymbol{b}}}
\newcommand{\bfA}{{\boldsymbol{A}}}
\newcommand{\bfM}{{\boldsymbol{M}}}
\newcommand{\bfc}{{\boldsymbol{c}}}
\newcommand{\bfd}{{\boldsymbol{d}}}
\newcommand{\bff}{{\boldsymbol{f}}}
\newtheorem{theorem}{Theorem}
\newtheorem{lemma}{Lemma}
\newtheorem{corollary}{Corollary}
\theoremstyle{definition}
\newtheorem{definition}{Definition}
\theoremstyle{remark}
\newtheorem{remark}{Remark}
\newtheorem{example}{Example}
\begin{document}

\title{Symbols and exact regularity\\ of symmetric pseudo-splines of any arity}

\author{Georg Muntingh\footnote{SINTEF ICT, PO Box 124 Blindern, 0314 Oslo, Norway}}

\maketitle
                    
\begin{abstract}
Pseudo-splines form a family of subdivision schemes that provide a natural blend between interpolating schemes and approximating schemes, including the Dubuc-Deslauriers schemes and B-spline schemes. Using a generating function approach, we derive expressions for the symbols of the symmetric $m$-ary pseudo-spline subdivision schemes. We show that their masks have positive Fourier transform, making it possible to compute the exact H\"older regularity algebraically as a logarithm of the spectral radius of a matrix. We apply this method to compute the regularity explicitly in some special cases, including the symmetric binary, ternary, and quarternary pseudo-spline schemes.
\end{abstract}

\noindent {\em MSC: 65D10, 26A16}

\noindent {\em Keywords: } Subdivision, H\"older regularity, higher arity, pseudo-splines.

\section{Introduction}\label{sec:intro}
Subdivision is a recursive method for generating curves, surfaces and other geometric objects. Rather than having a complete description of the object of interest at hand, subdivision generates the object by repeatedly refining its description starting from a coarse set of control points. See the seminal work~\cite{Cavaretta.Dahmen.Micchelli91} and comprehensive survey \cite{Dyn.Levin02}.

Pseudo-splines form a family of subdivision schemes that provide a natural blend between interpolating schemes and approximating schemes, including the Dubuc-Deslauriers schemes and B-spline schemes. Primal binary pseudo-splines were introduced in the context of framelets \cite{Daubechies.Han.Ron.Shen03, Dong.Shen07}, followed by the introduction of a family of schemes analogous to the Dubuc-Deslauriers schemes \cite{Dyn.Floater.Hormann04} and its generalization to a family of dual binary pseudo-splines \cite{Dyn.Hormann.Sabin.Shen08}. After this, pseudo-splines were introduced in the context of surface schemes \cite{Deng.Hormann14}, nonstationary schemes \cite{Conti.Gemignani.Romani16}, and $m$-ary schemes \cite{Conti.Hormann11}, where the $m$-ary pseudo-splines were defined (see Definition \ref{def:pseudosplines}) in terms of their ability to generate and reproduce polynomials.

Polynomial reproduction is important, because it is tied to the approximation order of the scheme. If a subdivision scheme reproduces polynomials up to degree $l$, then its application to initial data sampled from a function of class $C^l$ will reproduce the Taylor polynomial of degree $l$ and thus yield approximation order $l+1$. In this sense, pseudo-splines balance approximation power, regularity of the limit function, and support size. By resulting to schemes of higher arity or giving up symmetry of the limit function, better trade-offs between these criteria can be obtained (cf. \cite[Table 1]{Mustafa.Khan09}, \cite[\S 5.5--5.6]{Conti.Hormann11}).

While fast recursive algorithms are known for computing the symbol of a pseudo-spline, an explicit formula has so far only appeared in the binary case \cite{Conti.Hormann11}. In this paper we derive an explicit formula of the pseudo-spline symbol for any arity, in terms of a generating function involving Chebyshev polynomials of the second kind. The generating function framework allows us to juggle a three-parameter family of pseudo-splines, involving the arity $m$, degree of polynomial generation $n$, and degree of polynomial reproduction $l$.

It is well known \cite{Rioul92, Han02} that interpolatory schemes with positive Fourier transform admit exact formulas for the (H\"older) regularity, and it was recently shown that the same technique can also be applied to non-interpolatory schemes \cite{Floater.Muntingh13}. In the self-contained appendix to this paper, we show that these results generalize to schemes of arbitrary arity, a result that can ultimately be attributed to the validity of the finiteness conjecture for the joint spectral radius of subdivision submatrices derived from schemes with positive Fourier transform \cite{Charina14, Moeller15, Moeller.Reif14}. Together with the explicit formulas for the symbol established in this paper, this allows us to compute the corresponding regularities algebraically as logarithms of roots of univariate polynomials, which are provided numerically in several tables.

The method in this paper has been implemented in \texttt{Sage} and tested for the examples in this paper (and many other examples). Moreover, in many cases the results proven in this paper were verified symbolically. The resulting complementary worksheet, which includes an interactive ``pseudo-spline explorer applet'', can be tried out online in \texttt{SageMathCloud} following a link on the website of the author \cite{WebsiteGeorg}. 

The plan of the paper is as follows. In Section \ref{sec:background} we first recall some basic notions and facts from subdivision. Then, in Section \ref{sec:pseudo-splines}, we define the $m$-ary pseudo-splines and derive their symbols as a truncated generating function. Finally in Section \ref{sec:regularity} we proceed to analyze their regularity, applying the method developed in the Appendix,  followed by a conclusion in Section~\ref{sec:conclusion}.

\section{Background}\label{sec:background}
\subsection{Basic notation}
For ease of reference, we first describe the notation used throughout the paper:
\begin{itemize}
\item $\NN_0\subset\NN\subset \ZZ$ the nonnegative, positive, and entire set of integers;
\item $\ell$ the subdivision level of the data;
\item $i$ the imaginary unit;
\item $\bfa = [a_j]_{j\in \ZZ}, \bfb = [b_j]_{j\in \ZZ},\ldots$ sequences;
\item $a(z), b(z), \ldots$ Laurent polynomials, $z$-transforms of the sequences $\bfa, \bfb,\ldots$;
\item $A(\xi), B(\xi),\ldots$ discrete-time Fourier transforms of the sequences $\bfa, \bfb, \ldots$;
\item $m = 2m' + \epsilon$ the arity of the scheme, with $\epsilon\in\{0,1\}$ the parity of the arity;
\item $n$ and $l = 2l' + 1$ parameters of the pseudo-spline scheme.
\end{itemize}
	
\subsection{Subdivision schemes}
A linear, univariate, stationary, uniform \emph{subdivision scheme} $S_\bfa$, of arity \mbox{$m\geq 2$} and with \emph{mask} $\bfa = [a_i]_{i\in \ZZ}$ a compactly supported real sequence, is based on repeatedly applying the \emph{refinement rule}
\begin{equation}\label{eq:scheme}
 f_{\ell+1,j} = \sum_k a_{j-mk} f_{\ell, k},
\end{equation}
starting from \emph{initial data} $\bff_0 = [f_{0,j}]_{j\in \ZZ}$. In terms of polynomial arithmetic,
\[ a(z) := \sum_{j\in \ZZ} a_j z^j,\qquad f_\ell(z) := \sum_j f_{\ell,j} z^j,\qquad f_{\ell+1}(z) = a(z) f_\ell(z^m), \]
where the mask $\bfa$ and $\bff_\ell$, the \emph{data at level $\ell$}, have been encoded by the \emph{symbols} $a(z)$ and $f_\ell(z)$ as formal Laurent polynomials. A third representation is
\[ A(\xi) := a(\rme^{-i\xi}),\qquad F_\ell(\xi) := f_\ell(\rme^{-i\xi}), \qquad F_{\ell+1}(\xi) = A(\xi) F_\ell(m\xi), \]
as discrete-time Fourier transforms of the mask $\bfa$ and data $\bff_\ell$ at level $\ell$.

\subsection{Odd and even symmetry}
If $j_0$ is the minimal integer and $j_1$ the maximal integer for which $a_{j_0}, a_{j_1}\neq 0$, the mask has \emph{length} $j_1 - j_0 + 1$ and is \emph{supported} on $[j_0, j_1]$.

We distinguish two types of symmetry, depending on whether the mask has odd or even length. A subdivision scheme $S_\bfa$ is \emph{odd symmetric} if there exists an index $j_0$ such that $a_{j_0 + j} = a_{j_0 - j}$ for all $j$, or equivalently, if $a(z) = z^{2j_0} a(z^{-1})$. Similarly $S_\bfa$ is \emph{even symmetric} if there exists an index $j_0$ such that $a_{j_0 + j} = a_{j_0 + 1 - j}$ for all $j$, or equivalently, if $a(z) = z^{2j_0 - 1} a(z^{-1})$. In either case, the scheme and symbol are called \emph{symmetric}.

A mask of length $2L+1-\varepsilon$, $\varepsilon\in \{0,1\}$, is \emph{centered} if it takes the form $\bfa = [a_{-L}, \ldots, a_0, \ldots, a_{L-\varepsilon}]$, with $a_{-L}, a_{L-\varepsilon}\neq 0$. (Note that some authors center their masks of even length at $1/2$ instead of $-1/2$.) 

\begin{remark}\label{rem:symmetryfactorisation}
Suppose a symbol admits the factorization $a(z) = c(z) b(z)$, with both $a(z)$ and $c(z)$ symmetric. Then, for some integers $j_0, j_1$, one finds that
\[ b(z) = \frac{a(z)}{c(z)} = \frac{z^{j_0} a(z^{-1})}{z^{j_1} c(z^{-1})} = z^{j_0 - j_1} b(z^{-1}) \]
is symmetric as well. For instance, whenever a symbol $a(z)$ is even symmetric, it has a root at $z = -1$ and therefore admits the factorization $a(z) = (1+z) b(z)$, with $b(z)$ odd symmetric. 
\end{remark}

\subsection{Parametrization}
At each level $\ell$, we consider the data $\bff_\ell = [f_{\ell,j}]_j$ as the values of a continuous interpolant $L_\ell$ at parameters $t_{\ell,j}$, with $t_{\ell, j+1} - t_{\ell,j} = m^{-\ell}$. We choose $L_\ell$ to be piecewise linear, but remark that in some cases piecewise polynomials of higher degree can be used to simplify the analysis of the scheme \cite{Floater11, Floater.Siwek13}. 

It has been shown \cite{Conti.Hormann11} that for reproduction (definition below) of linear polynomials it is necessary to introduce a constant \emph{(parameter) shift}
\begin{equation} \label{eq:tau}
\tau := a'(1)/m,
\end{equation}
between the levels, in the sense that the parameters $t_{\ell,j}$ are determined by
\begin{equation}\label{eq:correctparametrization}
t_{\ell, j} = t_{\ell, 0} + \frac{j}{m^\ell},\qquad t_{\ell+1,0} = t_{\ell,0} - \frac{\tau}{m^{\ell+1}}, \qquad
j\in \ZZ,\qquad \ell \in \NN_0.
\end{equation}

For schemes with centered masks, we consider two types of parametrizations (see \cite[Definition 5.5]{Conti.Hormann11}):
\begin{itemize}
\item \emph{primal} (or \emph{standard}) \emph{parametrization}: $\tau = 0$ and $t_{0,0} = 0$. This corresponds to attaching the initial data $f_{0,j}$ to the integers $t_{0,j} = j$.
\item \emph{dual parametrization}: $\tau = -1/2$ and $t_{0,0} = \tau/(m-1)$. This corresponds to attaching the data $f_{\ell,j m^\ell}$ to the integers $j$ in the limit $\ell\to\infty$.
\end{itemize}
The reason is that for odd (respectively even) symmetric schemes, only the primal (respectively dual) parametrization can reproduce linear polynomials \cite[Corollary 5.7]{Conti.Hormann11}.

\subsection{Limit function}
The scheme $S_\bfa$ is \emph{convergent}, if, for any choice of initial data $\bff_0$, the sequence $[L_\ell]_\ell$ converges in the uniform norm to some function $f = f_\bfa$, called a \emph{limit function} of the scheme. The scheme is \emph{interpolatory} if $a_{mj} = \delta_{j,0}$, in which case $f$ interpolates $\bff_0$. The schemes in this paper are assumed to be \emph{nonsingular}, meaning $f = 0$ precisely when $\bff_0 = 0$. By linearity of the refinement rule \eqref{eq:scheme}, it suffices to study the \emph{cardinal limit function} $\phi = \phi_\bfa$ obtained by taking as initial data $\bff_0 = \bfdelta = [\delta_{0,j}]_j$, with $\delta_{k,j}$ the Kronecker delta. 

\subsection{Size of the support}
The support of the cardinal limit function $\phi_\bfa$ of an $m$-ary subdivision scheme is determined by the support of the mask $\bfa = [a_j]_j$ and the arity $m$. In particular, if $\bfa = [a_0, \ldots, a_N]$, then $\phi_\bfa$ has support (cf. \cite{Conti.Hormann11, Ivrissimtzis.Sabin.Dodgson04})
\[ \supp(\phi_\bfa) = \left[0, \frac{N}{m-1} \right]. \]

\subsection{Convergence}
A necessary condition for convergence \cite{Han.Jia98} of the scheme $S_\bfa$ is
\begin{subequations}\label{eq:convergence}
\begin{equation}\label{eq:convergence1}
\sum_k a_{mk} = \sum_k a_{mk + 1} = \cdots = \sum_k a_{mk + m - 1} = 1,
\end{equation}
and we will make this assumption. This condition can be expressed in terms of the symbol as
\begin{equation}\label{eq:convergence2}
a(1) = m,\qquad a(\zeta^1_m) = a(\zeta^2_m) = \cdots = a(\zeta^{m-1}_m) = 0,
\end{equation}
\end{subequations}
where $\zeta_m := \exp(2\pi i/m)$. Under this condition, it follows from the refinement rule that constant polynomials are reproduced. 

\subsection{H\"older regularity}
The limit function $f$ has \emph{(H\"older) regularity} $\alpha$, $0 < \alpha < 1$, written $f\in C^\alpha$, if there exists a constant $K$ such that
\[ |f(x) - f(y)| \le K |x - y|^\alpha,\qquad \text{for all } x,y \in \RR.\]
Moreover, we write $f\in C^{q+\alpha}$ for $q\in \NN_0$ and $0 < \alpha < 1$, if $f$ is $q$ times continuously differentiable, and $f^{(q)} \in C^\alpha$. Correspondingly, we say that the scheme \eqref{eq:scheme} has H\"older regularity $\gamma$ for some real $\gamma \geq 0$, if 
\begin{itemize}
\item for every $\beta < \gamma$, $f\in    C^\beta$ for all initial data $\bff_0$, and
\item for every $\beta > \gamma$, $f\notin C^\beta$ for some initial data $\bff_0$.
\end{itemize}

\subsection{Polynomial generation}
The scheme $S_\bfa$ is said to \emph{generate} polynomials up to degree $n$ if any polynomial of degree at most $n$ is the limit function for some choice of the initial data $\bff_0$. This happens precisely when
\begin{subequations}\label{eq:polynomialgeneration}
\begin{equation}\label{eq:polynomialgeneration1}
a(1)=m,\qquad a^{(k)}(\zeta^j_m) = 0,\qquad j = 1,\ldots, m-1,\qquad k = 0,\ldots, n,
\end{equation}
or, equivalently, when the symbol $a(z)$ admits a Laurent polynomial factorization
\begin{equation}\label{eq:polynomialgeneration2}
a(z) = m\sigma_m(z)^{n+1} b(z),\qquad b(1) = 1,
\end{equation}
\end{subequations}
where $\sigma_m$ is the \emph{$m$-ary smoothing factor} defined by
\begin{equation}\label{eq:sigma}
\sigma_m(z) := \frac{1 + z + \cdots + z^{m-1}}{m} = \frac{1 - z^m}{m(1 - z)}.
\end{equation}
We will later use that
\begin{equation}\label{eq:sigmaderivative}
\sigma_m^{(k)}(1) = \frac{(m-1)\cdots (m-k)}{k+1}, \qquad k\geq 0.
\end{equation}
We refer to $b(z)$ as the \emph{derived symbol}. The special case $b(z) = 1$ yields the \emph{$m$-ary degree $n$ B-spline scheme}.

\subsection{Polynomial reproduction}
The scheme $S_\bfa$ is said to \emph{reproduce} polynomials up to degree $l$ if any polynomial of degree at most $l$ is the limit function for the initial data $\bff_0$ sampling this polynomial. This happens \cite[Theorem 4.3]{Conti.Hormann11} precisely when \eqref{eq:polynomialgeneration} holds and, in addition,
\begin{subequations}\label{eq:ReproductionConditions}
\begin{equation}\label{eq:ReproductionConditions1}
a^{(k)}(1) = m \tau (\tau - 1) \cdots (\tau - k + 1),\qquad k = 0, \ldots, l,
\end{equation}
which, by the following lemma, is equivalent to the power series $a(z) - mz^\tau$ having a zero of order $l+1$ at $z=1$, i.e.,
\begin{equation}\label{eq:ReproductionConditions2}
a(z) - mz^\tau = \sum_{k = l + 1}^\infty \left[\frac{a^{(k)}(1)}{k!} - m \frac{\tau (\tau - 1) \cdots (\tau - k + 1)}{k!} \right] (z-1)^k.
\end{equation}
\end{subequations}

\begin{lemma}\label{lem:reproduction}
For any Laurent polynomial $a(z)$ and associated shift $\tau$ as in \eqref{eq:tau}, the conditions \eqref{eq:ReproductionConditions1} and \eqref{eq:ReproductionConditions2} are equivalent.
\end{lemma}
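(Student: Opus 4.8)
The plan is to recognize both conditions as statements about the Taylor coefficients of the analytic function $a(z) - mz^\tau$ at $z = 1$, and to read off the equivalence directly. The one tool I need is the generalized binomial series: interpreting $z^\tau = \exp(\tau\log z)$ via the principal branch of the logarithm (so that $1^\tau = 1$), I would expand
\[
z^\tau = \bigl(1 + (z-1)\bigr)^\tau = \sum_{k=0}^\infty \binom{\tau}{k}(z-1)^k, \qquad \binom{\tau}{k} = \frac{\tau(\tau-1)\cdots(\tau-k+1)}{k!},
\]
valid on the disc $|z-1| < 1$. This identifies the $k$-th Taylor coefficient of $mz^\tau$ at $z = 1$ as $m\binom{\tau}{k}$.

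First I would observe that, since $a(z)$ is a Laurent polynomial, it is analytic on a neighborhood of $z = 1$ and has the usual Taylor expansion $a(z) = \sum_{k\geq 0} \frac{a^{(k)}(1)}{k!}(z-1)^k$ there. Subtracting the two series, the difference $a(z) - mz^\tau$ is analytic near $z = 1$ with Taylor coefficients
\[
\frac{a^{(k)}(1)}{k!} - m\binom{\tau}{k} = \frac{a^{(k)}(1)}{k!} - m\frac{\tau(\tau-1)\cdots(\tau-k+1)}{k!}, \qquad k \geq 0.
\]

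The equivalence then follows by matching vanishing orders. By definition, $a(z) - mz^\tau$ has a zero of order $l+1$ at $z = 1$ exactly when its Taylor coefficients of order $0, 1, \ldots, l$ all vanish, i.e.\ when $\frac{a^{(k)}(1)}{k!} = m\binom{\tau}{k}$ for $k = 0, \ldots, l$, which upon clearing the common factor $1/k!$ is precisely \eqref{eq:ReproductionConditions1}. Conversely, assuming \eqref{eq:ReproductionConditions1}, the coefficients of order at most $l$ vanish while the surviving coefficients of order at least $l+1$ are exactly those displayed in \eqref{eq:ReproductionConditions2}, so the stated expansion holds as an identity of power series. Thus the two conditions describe the same property.

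I do not expect any serious obstacle; the only point requiring care is the analytic meaning of $z^\tau$ for non-integer $\tau$ together with the radius of convergence of its binomial expansion. Once $z^\tau$ is fixed to the principal branch with $1^\tau = 1$, both $a(z)$ and $mz^\tau$ are analytic near $z = 1$, the two expansions converge on a common neighborhood, and the argument reduces to comparing Taylor coefficients term by term. As a sanity check, the $k = 1$ instance of \eqref{eq:ReproductionConditions1} reads $a'(1) = m\tau$, which holds automatically by the definition $\tau = a'(1)/m$ in \eqref{eq:tau}.
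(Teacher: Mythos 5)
Your proof is correct and follows essentially the same route as the paper: expand $a(z)$ in its Taylor series at $z=1$, expand $mz^\tau$ via the generalized binomial series, and compare coefficients of the difference. The only difference is that you spell out the branch choice and convergence of the binomial series, which the paper leaves implicit.
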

\begin{proof}
Since $a(z)$ is a polynomial times a power of $z$, it admits a power series expansion
\[ a(z) = \sum_{k=0}^\infty \frac{a^{(k)}(1)}{k!} (z-1)^k.\]
Similarly, $mz^\tau$ can be expanded as a binomial series, 
\[ mz^\tau = m\big(1 + (z-1)\big)^\tau = m\sum_{k=0}^\infty \frac{\tau (\tau - 1) \cdots (\tau - k + 1)}{k!} (z-1)^k. \]
The difference of these expansions is
\begin{equation*}
a(z) - mz^\tau = \sum_{k = 0}^\infty \left[\frac{a^{(k)}(1)}{k!} - m \frac{\tau (\tau - 1) \cdots (\tau - k + 1)}{k!} \right] (z-1)^k,
\end{equation*}
which takes the form \eqref{eq:ReproductionConditions2} precisely when \eqref{eq:ReproductionConditions1} holds, in which case $a(z) - mz^\tau$ has a zero of order $l+1$ at $z = 1$.
\end{proof}

\subsection{Shifted schemes}
For a scheme $S_\bfa$ with mask $\bfa = [a_j]_j$ and $k\in \ZZ$, consider the \emph{shifted} scheme $S_{\overline{\bfa}}$ with mask $\overline{\bfa} = [a_{j+k}]_j$. Let us conclude this section describing how the above properties are affected by shifting.

If $[a_j]_j$ is odd (even) symmetric, then, for any integer $j_0$, the shifted mask $[a_{j + j_0}]_j$ is odd (even) symmetric as well. An integral shift in the index of the mask $\bfa$ corresponds to an integral shift in the argument of the limit function $f$; in particular its support size, regularity, and ability to generate polynomials are unchanged. If we define the corresponding shifts $\tau_\bfa$ and $\tau_{\overline{\bfa}}$ as in \eqref{eq:tau}, then
\begin{equation}\label{eq:shifted}
   \tau_{\overline{\bfa}}
 = \frac{\overline{a}'(1)}{m}
 = \frac{( z^k a(z) )'(1)}{m}
 = \frac{a'(1) + k a(1)}{m}
 = \frac{a'(1)}{m} + k
 = \tau_{\bfa} + k,
\end{equation}
where we used the convergence assumption \eqref{eq:convergence}. With these parameter shifts, the shifted scheme $S_{\overline{\bfa}}$ reproduces polynomials up to degree $l$ precisely when $S_\bfa$ reproduces polynomials up to degree $l$ \cite[Corollary 5.1]{Conti.Hormann11}.

\section{Pseudo-spline symbols}\label{sec:pseudo-splines}
\subsection{Definition and examples}

\noindent The conditions for polynomial generation and reproduction have been been applied in \cite{Conti.Hormann11} to define pseudo-splines of any arity, generalizing the families of primal and dual binary pseudo-splines described in \cite{Dong.Shen07} and \cite{Dyn.Hormann.Sabin.Shen08}.

\begin{definition}\label{def:pseudosplines}
For any $\tau\in \RR$ and $n,l \in \NN_0$ and integer $m\geq 2$, the $m$-ary \emph{pseudo-spline} with parameters $n,l$ and shift $\tau$ is defined to be the scheme $S_\bfa$ with minimal support satisfying
\begin{itemize}
\item[] Condition \eqref{eq:polynomialgeneration}, for polynomial generation up to degree $n$, and 
\item[] Condition \eqref{eq:ReproductionConditions}, necessary for polynomial reproduction up to degree $l$.
\end{itemize}
\end{definition}

\begin{remark}
Since polynomial reproduction of degree $l$ also requires Condition~\eqref{eq:polynomialgeneration} to hold with $n = l$, the actual degree of polynomial reproduction is $\min (n,l)$.
\end{remark}
For specific parameters $m,n,l$, and $\tau$, the symbol of the pseudo-spline scheme can be found by direct computation. Applying the Leibniz rule with respect to the factorization \eqref{eq:polynomialgeneration2}, Condition \eqref{eq:ReproductionConditions1} becomes
\[ m\sum_{j=0}^k {k\choose j} \frac{\rmd^{k-j} \sigma_m^{n+1} }{\rmd z^{k-j} } (1) \frac{\rmd^j b}{\rmd z^j}(1) = m\tau(\tau-1)\cdots(\tau-k+1),\qquad k = 0,\ldots,l, \]
forming a linear system $\bfA\bfd = \bfc$, with
\[\bfc = m[1,\tau,\ldots,\tau(\tau-1)\cdots(\tau-l+1)]^\rmT,\qquad \bfd = [b(1),b'(1),\ldots,b^{(l)}(1)]^\rmT, \]
and
\[ \bfA = m
\begin{bmatrix}
{0\choose 0} \sigma_m^{n+1}(1)   & 0 & \cdots & 0\\
{1\choose 0} \frac{\rmd \sigma_m^{n+1}}{\rmd z} (1)  & {1\choose 1} \sigma_m^{n+1}(1)  & \cdots & 0\\
\vdots & \vdots & \ddots & \vdots\\
{l\choose 0} \frac{\rmd^l \sigma_m^{n+1}}{\rmd z^l}(1) & {l\choose 1} \frac{\rmd^{l-1} \sigma_m^{n+1}}{\rmd z^{l-1}}(1) & \cdots & {l\choose l} \sigma_m^{n+1} (1)
\end{bmatrix}.
\]
Note that the entries of $\bfA$ can be computed recursively using the chain rule and \eqref{eq:sigmaderivative}, and explicitly using Fa\`a di Bruno's formula.

\begin{example}
Let $m = n = l = 3$. Setting up and solving the above system yields
\[
\bfA =
\begin{bmatrix}
  3 &   0 &  0 & 0\\
 12 &   3 &  0 & 0\\
 44 &  24 &  3 & 0\\
144 & 132 & 36 & 3
\end{bmatrix},\ 
\bfc = 3\begin{bmatrix} 1\\\tau\\ \tau(\tau-1)\\ \tau(\tau-1)(\tau-2) \end{bmatrix},\ 
\bfd = \begin{bmatrix} 1\\ \tau - 4\\ \tau^2 - 9\tau + 52/3\\ \tau^3 - 15\tau^2 + 66\tau - 80 \end{bmatrix}
\]
In particular for the primal parametrization, $\tau \equiv 0$ and one obtains the symbol of the primal ternary 4-point Dubuc-Deslauriers scheme \cite{Deslauriers.Dubuc89},
\[ a(z) = 3\left(\frac{1 + z + z^2}{3}\right)^4 \left(- \frac43 + \frac{11}{3}z - \frac43 z^2\right). \]
For the dual parametrization, $\tau \equiv 1/2$ and one obtains the symbol of the dual ternary 4-point Dubuc-Deslauriers scheme \cite{Ko.Lee.Yoon07},
\[ a(z) = -\frac{1}{16}\left(\frac{1 + z + z^2}{3}\right)^4\cdot (1 + z)\cdot (35 - 94z + 35z^2). \]
\end{example}

\begin{example}
Let $l = 1$ and suppose \eqref{eq:polynomialgeneration2} holds with $n\geq 1$. For $k = 0$, Condition \eqref{eq:ReproductionConditions1} becomes $b(1) = a(1)/m = 1$, which is a consequence of the convergence by \eqref{eq:convergence2}. For $k = 1$, Condition \eqref{eq:ReproductionConditions1} demands
\[ \tau = \frac{a'(1)}{m} = \frac12 (m-1)(n+1) + b'(1) \]
by \eqref{eq:sigmaderivative}. Replacing $b(z)$ by $z^k b(z)$ does not change the minimal support property and adds $k$ to the shift $\tau$ by \eqref{eq:shifted}. Up to an index shift of the mask, therefore, the value of $b'(1)$ is defined modulo the integers. Thus the size of the mask with minimal support depends on how $m, n$, and $\tau\pmod \ZZ$ combine.

As remarked in Section \ref{sec:background}, to reproduce linear polynomials, odd symmetric symbols $a(z)$ require $\tau = 0 \pmod \ZZ$, while even symmetric symbols require $\tau = 1/2\pmod \ZZ$. Therefore, we arrive at the following possibilities for the symbol $b(z)$ of minimal length, for some $k\in \ZZ$:
\begin{center}
\begin{tabular*}{\columnwidth}{l@{\extracolsep{\stretch{1}}}*{3}{c}@{}}
\toprule
                      & $\tau \equiv 0\pmod \ZZ$ & $\tau\equiv 1/2\pmod \ZZ$ \\ \midrule
 $m$ odd  or $n$ odd  &      $z^k$    &      $z^k(1+z)/2$  \\
$m$ even and $n$ even & $z^k(1+z)/2$  &          $z^k$     \\
\bottomrule
\end{tabular*}
\end{center}
For other values of $\tau\pmod \ZZ$ one obtains asymmetric schemes, which are beyond the scope of this paper.
\end{example}

\begin{remark}
For $m=2$ one has $\sigma_m(z) = (1+z)/2$. By Remark \ref{rem:symmetryfactorisation}, any even symmetric symbol has a factor $1 + z$, and after removing all factors $\sigma_m(z)$ from $a(z)$ and shifting we are left with an odd symmetric symbol
\[ b(z) = b_0 + b_1 (z^1 + z^{-1}) + \cdots + b_k (z^k + z^{-k}). \]
Then
\[ b'(z) = 1\cdot b_1 (1 - z^{-2}) + \cdots + k\cdot b_k (z^{k-1} - z^{-k-1}) \]
implies $b'(1) = 0$ and therefore $\tau = (n+1)/2$. Hence for the binary schemes of minimal length in the above example, only the diagonal case $b(z) = z^k$ occurs.
\end{remark}

\subsection{Generating function approach}
In this section we write $m = 2m' + \epsilon$, with $\epsilon \in \{0,1\}$. We assume that the parameter $l = 2l' + 1$ is odd and that $b(z)$ is odd symmetric and centered at zero, as in this case we are able to determine the regularity exactly.

How should we choose $b(z)$ such that $a(z) = m \sigma_m^{n+1}(z) b(z)$ satisfies \eqref{eq:ReproductionConditions1}? A convenient basis for the vector space of odd symmetric symbols whose support ranges from $-l'$ to $l'$ turns out to be
\[ \{1, \delta(z), \ldots, \delta^{l'}(z)\},\qquad \delta(z) := - \frac{(1-z)^2}{4z}.\]

For fixed integers $m\ge 2$ and $n\ge 0$, consider the generating function
\begin{equation}\label{eq:primalgenerating}
\GGG(y) = \GGG_{m,n} (y) := \left( \frac{m}{U_{m-1}\left(\sqrt{1-y}\right)} \right)^{n+1},
\end{equation}
where $U_{m-1}$ is the Chebyshev polynomial of the second kind of degree $m-1$ defined implicitly by
\begin{equation}\label{eq:Chebyshev1}
U_{m-1} (x) = \frac{\sin(m\theta)}{\sin(\theta)}, \qquad x = \cos(\theta),
\end{equation}
explicitly by
\begin{equation}\label{eq:Chebyshev2}
U_{m-1}(x) := \frac{ \left(x + \sqrt{x^2 -1}\right)^m - \left(x - \sqrt{x^2 -1}\right)^m }{2\sqrt{x^2 - 1}},
\end{equation}
or recursively by
\begin{equation}\label{eq:Chebyshev3}
U_0(x) = 1,\quad U_1(x) = 2x,\quad U_d(x) = 2x U_{d-1}(x) - U_{d-2}(x),\quad d\geq 2.
\end{equation}

\begin{lemma}\label{lem:generatingpositive}
For any $m\ge 2$ and $n\ge 0$, the generating function $\GGG$ admits, for positive coefficients $g_k$, the power series expansion
\begin{equation}\label{eq:Gpowerseries}
\GGG(y) = \sum_{k=0}^\infty g_k y^k.
\end{equation}
\end{lemma}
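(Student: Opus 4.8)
The plan is to reduce the positivity of the $g_k$ to the positivity of the coefficients of the single factor $m/U_{m-1}(\sqrt{1-y})$, and then to establish the latter by exhibiting an explicit product factorization in which every factor manifestly has nonnegative power-series coefficients. Since $U_{m-1}(1) = m \neq 0$ (as $U_{m-1}(\cos\theta) = \sin(m\theta)/\sin\theta \to m$ when $\theta\to 0$), the function $\GGG$ is analytic at $y = 0$ with $\GGG(0) = 1$, so the expansion \eqref{eq:Gpowerseries} genuinely exists and the statement about its coefficients is well posed.

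First I would factor $U_{m-1}$ over its known roots $\cos(k\pi/m)$, $k = 1,\dots,m-1$, writing $U_{m-1}(x) = 2^{m-1}\prod_{k=1}^{m-1}\bigl(x - \cos(k\pi/m)\bigr)$. Exploiting that $U_{m-1}$ has the parity of $m-1$, the roots pair off as $\pm\cos(k\pi/m)$, with an extra simple root at $x = 0$ precisely when $m$ is even. Substituting $x = \sqrt{1-y}$ and using $1 - \cos^2(k\pi/m) = \sin^2(k\pi/m)$ collapses each pair into a linear factor in $y$, yielding, after fixing the leading constant by $U_{m-1}(1) = m$,
\[ U_{m-1}\bigl(\sqrt{1-y}\bigr) = m\,(1-y)^{(1-\epsilon)/2}\prod_{k=1}^{m'-1+\epsilon}\left(1 - \frac{y}{\sin^2(k\pi/m)}\right), \]
which is a genuine polynomial in $y$ when $m$ is odd, and $\sqrt{1-y}$ times such a polynomial when $m$ is even.

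Taking reciprocals gives
\[ \frac{m}{U_{m-1}\bigl(\sqrt{1-y}\bigr)} = (1-y)^{-(1-\epsilon)/2}\prod_{k=1}^{m'-1+\epsilon}\left(1 - \frac{y}{\sin^2(k\pi/m)}\right)^{-1}. \]
Each factor $\bigl(1 - y/\sin^2(k\pi/m)\bigr)^{-1} = \sum_{j\ge 0}\sin^{-2j}(k\pi/m)\,y^j$ is a geometric series with strictly positive coefficients, while $(1-y)^{-(1-\epsilon)/2}$ is either $1$ (for odd $m$) or the binomial series $\sum_{j\ge 0}\binom{2j}{j}4^{-j}y^j$ with strictly positive coefficients (for even $m$). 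A product of finitely many power series, each with nonnegative coefficients and positive constant term, again has nonnegative coefficients; and since in every case at least one factor carries a strictly positive coefficient in every degree, the product has strictly positive coefficients throughout. Raising this series to the power $n+1 \ge 1$ is then a product of series with strictly positive coefficients, so $\GGG(y) = \sum_k g_k y^k$ has $g_k > 0$ for all $k$.

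The main obstacle is handling the even and odd arities uniformly: when $m$ is even an unavoidable factor $\sqrt{1-y}$ survives the substitution, so I must pin down the correct branch (the principal one, positive near $y = 0$ where $\sqrt{1-y} = 1$) and verify that its binomial coefficients are positive, rather than treating $U_{m-1}(\sqrt{1-y})$ as a polynomial. A secondary point requiring care is upgrading nonnegativity of the coefficients to strict positivity; this is where I would invoke the presence, for every $m \ge 2$, of at least one factor (a geometric series when $m \ge 3$, the series for $(1-y)^{-1/2}$ when $m = 2$) contributing a positive coefficient in each degree.
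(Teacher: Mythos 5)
Your proposal is correct and follows essentially the same route as the paper: factor $U_{m-1}$ over its roots $\cos(k\pi/m)$, pair them by parity to get $(1-y)^{(1-\epsilon)/2}$ times a product of linear factors $1 - y/\sin^2(k\pi/m)$, take reciprocals, expand each factor as a geometric (or binomial) series with positive coefficients, and take the $(n+1)$-st power. You merely spell out more explicitly the strict-positivity bookkeeping and the branch of the square root, which the paper leaves implicit.
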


\begin{proof}
The Chebyshev polynomial $U_{m-1}(x)$ has roots $\cos\left(\frac{k\pi}{m}\right)$, $k = 1, \ldots$, $m-1$, by \eqref{eq:Chebyshev1} and leading coefficient $2^{m-1}$ and degree $m-1$ by \eqref{eq:Chebyshev3}, so that it admits the factorization
\[ U_{m-1} (x) = 2^{m-1} \prod_{k=1}^{m-1} \left[x - \cos\left(\frac{k\pi}{m} \right) \right]
           = 2^{m-1} x^{1 - \epsilon} \cdot \prod_{k=1}^{m' - 1 + \epsilon}\left[x^2 - \cos^2\left(\frac{k\pi}{m} \right) \right], \]
from which it follows that
\begin{equation}\label{eq:ChebFactor}
\frac{m}{U_{m-1}\left(\sqrt{1-y}\right)} = \frac{m}{2^{m-1}} \cdot \left(\frac{1}{1-y}\right)^{(1-\epsilon)/2} \cdot
\prod_{k=1}^{m'-1+\epsilon} \frac{1}{\sin^2\left(\frac{k\pi}{m} \right)- y}.
\end{equation}
Expanding into geometric series and taking powers one obtains the power series expansion \eqref{eq:Gpowerseries} with $g_k > 0$ for all~$k$.
\end{proof}

\begin{lemma} For any integers $m\geq 2$ and $n\geq 0$, the composition $\GGG\circ \delta$ is analytic at $z=1$.
\end{lemma}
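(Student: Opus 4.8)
The plan is to reduce the claim to the elementary fact that a composition of analytic functions is analytic, so that it suffices to check analyticity of each factor at the relevant point. First I would record that $\delta(z) = -(1-z)^2/(4z)$ is a rational function whose only pole is the simple pole at $z=0$; in particular it is analytic at $z=1$, where it takes the value $\delta(1)=0$. Thus it remains to show that $\GGG$ is analytic in a neighbourhood of $y=0$, for then $\GGG\circ\delta$ is analytic at $z=1$ because $\delta$ maps $z=1$ to the interior point $y=0$ of the domain of $\GGG$.

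The substance of the argument is therefore to exhibit a positive radius of convergence for the power series \eqref{eq:Gpowerseries}. Here I would lean directly on the factorization \eqref{eq:ChebFactor} established in the proof of Lemma~\ref{lem:generatingpositive}. Raising \eqref{eq:ChebFactor} to the power $n+1$ expresses $\GGG(y)$ as a constant times $(1-y)^{-(1-\epsilon)(n+1)/2}$ and a product of factors $(\sin^2(k\pi/m)-y)^{-(n+1)}$ for $k = 1, \ldots, m'-1+\epsilon$. Reading off the singularities in the $y$-plane, the branch point or pole coming from the first factor sits at $y=1$, while the product contributes poles of order $n+1$ at $y=\sin^2(k\pi/m)$. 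Since $\sin$ is increasing on $[0,\pi/2]$ and every argument $k\pi/m$ occurring here is at most $\pi/2$, the singularity nearest to the origin is the one at $y=\sin^2(\pi/m)$ (for $m=2$ this is the value $y=1$, consistent with $\sin^2(\pi/2)=1$). Consequently $\GGG$ is analytic on the disc $|y|<\sin^2(\pi/m)$, a disc of strictly positive radius, and in particular at $y=0$, where indeed $\GGG(0)=\big(m/U_{m-1}(1)\big)^{n+1}=1$ since $U_{m-1}(1)=m$.

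Combining the two observations finishes the proof: $\delta$ is analytic at $z=1$ with $\delta(1)=0$, the value $0$ lies strictly inside the disc of radius $\sin^2(\pi/m)>0$ on which $\GGG$ is analytic, and so the composite $\GGG\circ\delta$ is analytic at $z=1$. The only point requiring genuine care, the main obstacle such as it is, is the presence of the square root $\sqrt{1-y}$ in the definition \eqref{eq:primalgenerating} of $\GGG$, which a priori threatens a branch point at $y=0$; the factorization \eqref{eq:ChebFactor} is exactly what removes this worry, showing that near $y=0$ the function is single-valued and that the nearest genuine singularity is pushed out to $y=\sin^2(\pi/m)$. Everything else is routine bookkeeping of the geometric and binomial series already used in Lemma~\ref{lem:generatingpositive}.
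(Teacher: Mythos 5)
Your argument is correct, but it runs along a different track from the paper's. You establish analyticity of $\GGG$ itself at $y=0$ (reading off from \eqref{eq:ChebFactor} that the nearest singularity lies at $y=\sin^2(\pi/m)$, or at the branch point $y=1$ when $m=2$) and then invoke the composition-of-analytic-functions theorem together with $\delta(1)=0$. The paper instead never asserts analyticity of $\GGG$ as a function of $y$: it substitutes $\delta$ first, observes that $(1-y)^{(1-\epsilon)(n+1)/2}\GGG(y)$ is rational with no pole at $y=0$ so its pullback under $\delta$ is rational in $z$ with no pole at $z=1$, and then handles the remaining fractional power via the explicit branch identity $\sqrt{1-\delta(z)}=(z^{-1/2}+z^{1/2})/2$, which is analytic and nonzero at $z=1$. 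Your route is arguably cleaner and yields the radius of convergence $\sin^2(\pi/m)$ as a bonus; the paper's route sidesteps any discussion of branch choice in the $y$-plane by exhibiting the branch concretely in the $z$-variable. One small correction to your closing remark: the square root $\sqrt{1-y}$ threatens a branch point at $y=1$, not at $y=0$; near $y=0$ the principal branch is already analytic, and the only thing that needs checking there is $U_{m-1}(1)=m\neq 0$ (the factorization \eqref{eq:ChebFactor} is what lets you push the nearest \emph{genuine} singularity out to $\sin^2(\pi/m)$, but it is not needed merely to rule out a singularity at the origin).
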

\begin{proof}
The factorization \eqref{eq:ChebFactor} implies that
$(1 - y)^{(1-\epsilon)(n+1)/2} \cdot \GGG(y)$
is a rational function of $y$ with no pole at $y = 0$, so that substituting $y = \delta(z)$ gives a rational function of $z$ with no pole at $z = 1$, which is therefore analytic at $z = 1$. Since
\begin{equation}\label{eq:sqrt1mindelta}
\sqrt{1 - \delta(z)} = \sqrt{\frac{(1+z)^2}{4z}} = \frac{z^{-1/2} + z^{1/2}}{2}
\end{equation}
is analytic and nonzero at $z = 1$, its reciprocal is analytic at $z = 1$, implying that $\GGG\circ \delta$ is analytic at $z = 1$.
\end{proof}

Let $\GGG_{m,n,l}$ be the Taylor polynomial of degree $l' = (l - 1)/2$ to $\GGG_{m,n}$ at $y = 0$.

\begin{theorem}
The scheme with symbol
\begin{equation}\label{eq:pseudosymbol}
a_{m,n,l}(z) := m\sigma_m^{n+1}(z) b_{m,n,l}(z), \qquad b_{m,n,l}(z) := \GGG_{m,n,l} \big(\delta(z) \big)
\end{equation}
is a pseudo-spline of type $(n,l)$ with shift $\tau = (m-1)(n+1)/2$.
\end{theorem}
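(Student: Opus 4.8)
The plan is to verify the three defining features of the pseudo-spline in turn — polynomial generation to degree $n$, the reproduction conditions to degree $l$ (together with the asserted value of $\tau$), and minimality of the support — with everything resting on a single algebraic identity relating the smoothing factor $\sigma_m$ to the Chebyshev generating function $\GGG$.

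First I would establish the key identity. Substituting $z = \rme^{i\xi}$ near $z = 1$ and writing $x = \cos(\xi/2) = \sqrt{1 - \delta(z)}$, which is exactly \eqref{eq:sqrt1mindelta}, a direct computation using $1 - \rme^{i\alpha} = -2i\sin(\alpha/2)\rme^{i\alpha/2}$ and the trigonometric form \eqref{eq:Chebyshev1} of $U_{m-1}$ gives
\[ \sigma_m(z) = \frac{1 - z^m}{m(1-z)} = \frac{z^{(m-1)/2}}{m}\, U_{m-1}\big(\sqrt{1 - \delta(z)}\big). \]
Raising this to the power $n+1$, recalling $\tau = (m-1)(n+1)/2$ and the definition \eqref{eq:primalgenerating} of $\GGG$, the factor $U_{m-1}^{n+1}$ cancels and one obtains
\[ m\,\sigma_m^{n+1}(z)\, \GGG\big(\delta(z)\big) = m z^\tau \]
as analytic functions near $z = 1$. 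This is the engine of the whole proof: the untruncated product collapses exactly to $m z^\tau$.

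Generation and reproduction then follow quickly. Since $\sigma_m(1) = 1$ and $b_{m,n,l}(1) = \GGG_{m,n,l}(\delta(1)) = \GGG_{m,n,l}(0) = g_0 = \GGG(0) = (m/U_{m-1}(1))^{n+1} = 1$, the symbol $a_{m,n,l}$ has the factored form \eqref{eq:polynomialgeneration2}, so Condition \eqref{eq:polynomialgeneration} for generation up to degree $n$ holds. For reproduction, I would subtract the identity above from $a_{m,n,l} = m\sigma_m^{n+1}b_{m,n,l}$ to get
\[ a_{m,n,l}(z) - m z^\tau = m\,\sigma_m^{n+1}(z)\big[\GGG_{m,n,l}(\delta(z)) - \GGG(\delta(z))\big] = -m\,\sigma_m^{n+1}(z)\sum_{k=l'+1}^\infty g_k\, \delta(z)^k. \]
Because $\delta(z) = -(1-z)^2/(4z)$ vanishes to order exactly $2$ at $z = 1$ while $\sigma_m^{n+1}$ is nonzero there, the right-hand side vanishes to order at least $2(l'+1) = l+1$. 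By Lemma \ref{lem:reproduction} this is precisely Condition \eqref{eq:ReproductionConditions}, i.e. reproduction up to degree $l$. Reading off the first nontrivial order, $a_{m,n,l}'(1) = (mz^\tau)'(1) = m\tau$, confirms $\tau = a_{m,n,l}'(1)/m = (m-1)(n+1)/2$; alternatively $\tau = \tfrac12(m-1)(n+1) + b_{m,n,l}'(1)$ with $b_{m,n,l}'(1) = 0$, since $b_{m,n,l}$ is a polynomial in the reversal-symmetric $\delta$ and $\delta'(1) = 0$.

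Finally I would argue minimality of the support. Generation forces the factor $m\sigma_m^{n+1}$, so any competing symbol is $m\sigma_m^{n+1}b$ with $b$ a Laurent polynomial; dividing the requirement $a - mz^\tau = O((z-1)^{l+1})$ by the nonvanishing $m\sigma_m^{n+1}$ and using the identity shows it is equivalent to $b(z) - \GGG(\delta(z)) = O((z-1)^{l+1})$, hence to $b - b_{m,n,l} = O((z-1)^{l+1})$ since the discarded tail is already $O((z-1)^{l+1})$. Thus every admissible $b$ equals $b_{m,n,l}$ plus a Laurent multiple of $(z-1)^{l+1}$; any nonzero such multiple has length at least $l+2$, strictly exceeding the length $2l'+1 = l$ of $b_{m,n,l}$, and comparing support intervals shows the sum then strictly enlarges the support. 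Hence $b_{m,n,l}$ is the unique admissible symbol of minimal support, so $a_{m,n,l}$ is the pseudo-spline of type $(n,l)$. I expect the opening identity to be the main obstacle: the branch of the square root and the half-integer power $z^{(m-1)/2}$ must be handled consistently near $z = 1$ — they combine into the genuine Laurent identity $m^2\sigma_m^2 = z^{m-1}U_{m-1}^2(\sqrt{1-\delta})$ after squaring — whereas the support comparison, though slightly fiddly, is routine.
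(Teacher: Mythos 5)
Your argument is correct and, in its core, is essentially the paper's own proof: both hinge on the identity $U_{m-1}\bigl(\sqrt{1-\delta(z)}\bigr) = (z^{-m/2}-z^{m/2})/(z^{-1/2}-z^{1/2})$ (you derive it from the trigonometric form \eqref{eq:Chebyshev1}, the paper from \eqref{eq:Chebyshev2} --- the same computation), which gives $m\sigma_m^{n+1}(z)\,\GGG_{m,n}(\delta(z)) = mz^\tau$; then $a_{m,n,l}(z)-mz^\tau$ equals $-m\sigma_m^{n+1}(z)$ times the tail $\sum_{k>l'}g_k\delta^k(z)$, hence vanishes to order at least $2(l'+1)=l+1$ at $z=1$, and Lemma \ref{lem:reproduction} yields the reproduction condition while the factored form gives generation. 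The one place you genuinely diverge is the minimal-support claim, which the paper's proof does not address at all (it only notes, via $g_{l'+1}>0$, that the order of vanishing is exactly $l+1$). Your reduction of admissibility to $b-b_{m,n,l}$ being a Laurent multiple of $(z-1)^{l+1}$ is correct, but the concluding step ``comparing support intervals shows the sum then strictly enlarges the support'' is not automatic: a nonzero multiple $c(z)(z-1)^{l+1}$ may share an endpoint with the support of $b_{m,n,l}$ and cancel the coefficient there, so one must still rule out that such a perturbation yields a (necessarily asymmetric) derived mask of length at most $l$. That loose end concerns only the part of the statement the paper itself leaves implicit; the portion corresponding to the paper's actual proof is complete and matches it.
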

\begin{proof}
As the symbol \eqref{eq:pseudosymbol} satisfies the factorization \eqref{eq:polynomialgeneration2}, the scheme generates polynomials up to degree $n$. By \eqref{eq:Chebyshev2} and \eqref{eq:sqrt1mindelta},
\begin{align}\label{eq:ChebSine}
\!\!\!U_{m-1}\left(\sqrt{1-\delta(z)}\right)
& = \frac{ \left(\frac{1+z}{2\sqrt{z}} + \frac{1-z}{2\sqrt{z}}\right)^m - \left(\frac{1+z}{2\sqrt{z}} - \frac{1-z}{2\sqrt{z}}\right)^m }{2\frac{1-z}{2\sqrt{z}}}
 = \frac{ z^{-m/2} - z^{m/2} }{z^{-1/2} - z^{1/2}},
\end{align}
which gives
\[ \sigma_m^{n+1}(z) \cdot \GGG_{m,n}\big(\delta(z)\big)
= \left[\frac{1-z^m}{m(1-z)} \frac{m(z^{-1/2} - z^{1/2})}{z^{-m/2} - z^{m/2}}\right]^{n+1}
= z^\tau.\]
It follows that
\begin{align*}
a_{m,n,l}(z) - mz^\tau
& = -m \sigma_m^{n+1}(z)\big[ \GGG_{m,n}\big(\delta(z)\big) - \GGG_{m,n,l}\big(\delta(z)\big)\big]\\
& = -m \sigma_m^{n+1}(z) \delta^{l'+1}(z) \sum_{k=0}^\infty g_{k+l'+1} \delta^k(z).
\end{align*}
By Lemma \ref{lem:generatingpositive}, the coefficient $g_{l'+1} > 0$, so that the latter series is nonzero at $z = 1$, and $a_{m,n,l}(z) - mz^\tau$ has a zero of order exactly $2(l' + 1) = l+1$ at $z = 1$. Hence the theorem follows from Lemma \ref{lem:reproduction}.
\end{proof} 

\subsection{Explicit formulas for the symbol}
Next we determine explicit expressions for the pseudo-spline symbols in specific cases. First we need to bring the generating function $\GGG$ into a form convenient for computations.

\begin{theorem}\label{thm:ChebyshevExplicit}
For $m = 2m' + \epsilon$, with $\epsilon \in \{0,1\}$ and $m'\in \NN$, one has
\begin{equation}\label{eq:ChebyshevExplicit}
U_{m-1} \left(\sqrt{1-y}\right) = m\sqrt{1-y}^{1-\epsilon} \sum_{k = 0}^{m' - 1 + \epsilon} \frac{y^k}{(2k+1)!} \prod_{j=1}^k \left((2j - \epsilon)^2 - m^2\right) .
\end{equation}
\end{theorem}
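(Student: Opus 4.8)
The plan is to avoid resumming the binomial expansion of $(\cos\theta + i\sin\theta)^m$ directly — which would force a messy rearrangement of $\cos$-powers into powers of $\sin^2\theta$ — and instead to identify the right-hand side as the unique power-series solution of a second-order linear ODE. Writing $x = \sqrt{1-y}$ and $w(y) := U_{m-1}(x)$, I would first use that $U_{m-1}$ has the same parity as $m-1$: for $\epsilon = 1$ (odd $m$) it is even, so $w$ is already a polynomial in $y$ of degree $m'$, while for $\epsilon = 0$ (even $m$) it is odd, so $w = \sqrt{1-y}\,P(y)$ with $P$ a polynomial of degree $m'-1$. In both cases $P(y) := w(y)/\sqrt{1-y}^{\,1-\epsilon}$ is a polynomial of degree $m'-1+\epsilon$, exactly the range of the claimed sum, so it suffices to show its Taylor coefficients $c_k$ equal $\frac{m}{(2k+1)!}\prod_{j=1}^k\big((2j-\epsilon)^2 - m^2\big)$.

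Next I would derive the governing ODE. Starting from the Chebyshev equation $(1-x^2)U'' - 3xU' + (m^2-1)U = 0$ for $U = U_{m-1}$ and substituting $x = \sqrt{1-y}$ (so $1-x^2 = y$ and $\frac{dx}{dy} = -1/(2x)$, giving $U' = -2xw'$ and $U'' = 4x^2 w'' - 2w'$), a direct computation turns it into $4y(1-y)w'' + (6-8y)w' + (m^2-1)w = 0$. Substituting $w = \sqrt{1-y}^{\,1-\epsilon}P$ and clearing the common factor $\sqrt{1-y}^{\,1-\epsilon}$ then yields a hypergeometric-type equation $4y(1-y)P'' + \big(6 - (8+4(1-\epsilon))y\big)P' + \big(m^2 - 1 - 3(1-\epsilon)\big)P = 0$, i.e. coefficients $(6-8y,\,m^2-1)$ for $\epsilon=1$ and $(6-12y,\,m^2-4)$ for $\epsilon=0$.

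Finally, I would insert $P = \sum_k c_k y^k$ and read off the coefficient of $y^k$. After combining the three contributions and using $-(2k+1)^2 = -4k^2-4k-1$ and $-(2k+2)^2 = -4k^2-8k-4$ to absorb the $P''$- and $P'$-terms into the constant, the relation collapses to the two-term recurrence $(2k+2)(2k+3)c_{k+1} = \big((2k+2-\epsilon)^2 - m^2\big)c_k$. Since this recurrence is forward and $c_0 = P(0) = U_{m-1}(1) = m$, all coefficients are determined, and they coincide with the claimed $c_k$; moreover $(2k+2-\epsilon)^2 - m^2$ vanishes precisely when $2k+2-\epsilon = m$, i.e. $k = m'-1+\epsilon$, so the polynomial terminates exactly where the stated sum does.

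The main obstacle I expect is bookkeeping rather than conceptual: carrying out the two changes of variable correctly (the second derivative under $x=\sqrt{1-y}$, and the conjugation by $\sqrt{1-y}^{\,1-\epsilon}$, which shifts the first- and zeroth-order coefficients in an $\epsilon$-dependent way), and then tracking signs so the $P''$- and $P'$-pieces merge cleanly with $m^2-1-3(1-\epsilon)$ into $(2k+2-\epsilon)^2-m^2$. For rigor I would also note that $y=0$ is a regular singular point with indicial roots $0$ and $-1/2$, so the analytic (polynomial) solution is unique up to scale, which is what licenses pinning down $P$ from the single value $c_0 = m$.
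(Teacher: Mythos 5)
Your proof is correct, but it takes a genuinely different route from the paper's. The paper proves \eqref{eq:ChebyshevExplicit} by induction on the arity $m$ using the three-term recurrence $U_d = 2xU_{d-1} - U_{d-2}$ from \eqref{eq:Chebyshev3}: it verifies the base cases $m=2,3$, and for the inductive step rewrites $U_{m-2}\bigl(\sqrt{1-y}\bigr)$ and $U_{m-3}\bigl(\sqrt{1-y}\bigr)/\sqrt{1-y}$ via two telescoping-product identities so that the recurrence collapses to the claimed sum (only $\epsilon=0$ is written out, with $\epsilon=1$ declared analogous). You instead fix $m$ and recurse on the power $k$ of $y$: starting from the Chebyshev equation $(1-x^2)U''-3xU'+(m^2-1)U=0$, the substitutions $x=\sqrt{1-y}$ and $w=(1-y)^{(1-\epsilon)/2}P$ yield $4y(1-y)P''+\bigl(6-(12-4\epsilon)y\bigr)P'+\bigl(m^2-4+3\epsilon\bigr)P=0$, whose Taylor coefficients obey the two-term recurrence $(2k+2)(2k+3)c_{k+1}=\bigl((2k+2-\epsilon)^2-m^2\bigr)c_k$ with $c_0=U_{m-1}(1)=m$; I have checked that the singular-looking terms produced by the conjugation do combine into the constant shift $-3(1-\epsilon)$, that $(2k+2-\epsilon)^2-m^2$ is indeed the completed square of the right-hand coefficient (using $\epsilon^2=\epsilon$), and that the recurrence terminates exactly at $k=m'-1+\epsilon$. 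As for what each approach buys: the paper's induction needs nothing beyond the elementary recurrence \eqref{eq:Chebyshev3} but requires guessing and verifying two auxiliary product identities and treats the parities separately, whereas your argument handles both parities uniformly, exposes the product formula as a terminating hypergeometric series, and — thanks to the parity of $U_{m-1}$ making $P$ a polynomial a priori and the nonvanishing of $(2k+2)(2k+3)$ — is already complete from the forward recurrence alone, so the Frobenius/indicial-root remark is a pleasant sanity check rather than a needed ingredient.
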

\begin{proof}
From \eqref{eq:Chebyshev3} it follows that the statement holds for $m = 2, 3$. Suppose that $m = 2m' + \epsilon\geq 4$, with $\epsilon = 0$, and that the statement holds for all smaller $m$. For any $m'\in \NN$ and $k \in \NN_0$, the identities
\begin{align}
\frac{m'}{m'+k} \prod_{j=1}^k \left( 4j^2 - 4m'^2\right) & = \prod_{j=1}^k \left( (2j-1)^2 - (2m'-1)^2\right), \\
\frac{m'}{m'+k} \prod_{j=1}^k \left( 4j^2 - 4m'^2\right)& = \frac{m'-1}{m'-1-k}\prod_{j=1}^k \left( (2j)^2 - (2m'-2)^2\right)
\end{align}
follow by taking quotients, factoring the differences of the squares, and simplifying the telescoping product. Together with the hypothesis, we obtain
\begin{align*}
U_{m-2}\left(\sqrt{1-y} \right)
& = \sum_{k = 0}^{m' - 1} \frac{y^k}{(2k+1)!} (2m'-1) \prod_{j=1}^k \left( (2j-1)^2 - (2m'-1)^2 \right) \\
& = \sum_{k = 0}^{m' - 1} \frac{y^k}{(2k+1)!} (2m'-1) \frac{m'}{m'+k} \prod_{j=1}^k \left( 4j^2 - 4m'^2\right),\\
\frac{U_{m-3}\left(\sqrt{1-y} \right)}{\sqrt{1-y}}
& = \sum_{k=0}^{m'-2} \frac{y^k}{(2k+1)!} (2m'-2) \prod_{j=1}^k \left( (2j)^2 - (2m'-2)^2\right)\\
& = \sum_{k=0}^{m'-2} \frac{y^k}{(2k+1)!} \frac{2m'(m'-1-k)}{m'+k} \prod_{j=1}^k \left( 4j^2 - 4m'^2\right).
\end{align*}
Hence it follows from \eqref{eq:Chebyshev3} that
\begin{align*}
\frac{U_{m-1}\left(\sqrt{1-y}\right)}{\sqrt{1-y}} & = 2U_{m-2}\left(\sqrt{1-y}\right) - \frac{U_{m-3}\left(\sqrt{1-y}\right)}{\sqrt{1-y}}\\
& = 2m'\sum_{k=0}^{m'-1} \frac{y^k}{(2k+1)!} \prod_{j=1}^k \left(4j^2 - 4m'^2\right),
\end{align*}
which is equivalent to \eqref{eq:ChebyshevExplicit} under the assumption $\epsilon = 0$. The case $\epsilon = 1$ is derived analogously. By induction we conclude that \eqref{eq:ChebyshevExplicit} holds for all $m\geq 2$.
\end{proof}

The following corollary follows immediately from Theorem \ref{thm:ChebyshevExplicit} and the binomial series expansion.

\begin{corollary}
With
\[ P_m(y) := -\sum_{k = 1}^{m' + \epsilon - 1}
\left[\prod_{j = 1}^k \big((2j - \epsilon)^2 - m^2\big)\right]
\frac{y^k}{(2k+1)!}, \]
the generating function takes the form
\[ \GGG_{m,n}(y)
  = \left( \frac{(1-y)^{(\epsilon-1)/2}}{\displaystyle 1 - P_m(y)}\right)^{n+1}
  = (1-y)^{(n+1)(\epsilon-1)/2} \sum_{k = 0}^\infty {n + k \choose k} P_m^k(y). \]
\end{corollary}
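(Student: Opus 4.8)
The plan is to read the result straight off Theorem~\ref{thm:ChebyshevExplicit} by splitting off the constant term of the finite sum and then invoking the generalized binomial theorem. First I would apply Theorem~\ref{thm:ChebyshevExplicit} to write
\[ U_{m-1}\left(\sqrt{1-y}\right) = m(1-y)^{(1-\epsilon)/2}\sum_{k=0}^{m'-1+\epsilon}\frac{y^k}{(2k+1)!}\prod_{j=1}^k\left((2j-\epsilon)^2 - m^2\right). \]
The key observation is that the $k=0$ term of this sum equals $1$, since the empty product is $1$ and $y^0/1! = 1$, while the remaining terms, running from $k=1$ to $m'-1+\epsilon = m'+\epsilon-1$, are by definition precisely $-P_m(y)$. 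Hence the finite sum equals $1 - P_m(y)$, and
\[ U_{m-1}\left(\sqrt{1-y}\right) = m(1-y)^{(1-\epsilon)/2}\big(1 - P_m(y)\big). \]

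Substituting this into the definition $\GGG_{m,n}(y) = \big(m/U_{m-1}(\sqrt{1-y})\big)^{n+1}$, the factors of $m$ cancel and the exponent $(1-\epsilon)/2$ changes sign upon taking the reciprocal, giving
\[ \GGG_{m,n}(y) = \left(\frac{(1-y)^{(\epsilon-1)/2}}{1 - P_m(y)}\right)^{n+1}, \]
which is the first claimed identity. For the second identity I would separate the two factors as $(1-y)^{(n+1)(\epsilon-1)/2}\cdot\big(1-P_m(y)\big)^{-(n+1)}$ and expand the second factor by the generalized binomial theorem $(1-x)^{-(n+1)} = \sum_{k\ge 0}{n+k\choose k}x^k$ with $x = P_m(y)$. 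Since $P_m(0) = 0$ (the sum defining $P_m$ begins at $k=1$), this is a well-defined formal power series in $y$, which is analytic near $y=0$ and matches the stated right-hand side term by term.

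The computation presents no genuine obstacle; the only care needed is bookkeeping. I would verify that the two upper summation limits agree (indeed $m'-1+\epsilon = m'+\epsilon-1$), that the $k=0$ contribution is correctly isolated so that the sign in $P_m$ comes out right, and that the binomial index is ${n+k\choose k}$ rather than ${n+1+k\choose k}$: expanding $(1-x)^{-s}$ with $s=n+1$ produces coefficients ${s+k-1\choose k} = {n+k\choose k}$, as claimed.
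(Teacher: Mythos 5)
Your proof is correct and follows exactly the route the paper intends: the paper gives no explicit proof, stating only that the corollary ``follows immediately from Theorem \ref{thm:ChebyshevExplicit} and the binomial series expansion,'' and your argument --- isolating the $k=0$ term of the finite sum as $1$, identifying the remainder as $-P_m(y)$, substituting into \eqref{eq:primalgenerating}, and expanding $\bigl(1-P_m(y)\bigr)^{-(n+1)}$ via the generalized binomial theorem with coefficient ${n+k\choose k}$ --- is precisely that computation, with all the bookkeeping (summation limits, sign, binomial index) verified correctly.
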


\subsubsection{Binary pseudo-splines}
For $m = 2$ one has $P_2(y) = 0$ and
\[ \GGG_{2,n} (y) = (1 - y)^{-(n+1)/2},\]
and the binary pseudo-spline of type $(n,l)$ and shift $\tau = (n+1)/2$ has symbol
\begin{equation}\label{eq:SymbolBinary}
a_{2,n,l}(z) = 2\left(\frac{1 + z}{2} \right)^{n+1}\!\!\!\!\!\!\!\!\cdot b_{2,n,l}(z),\quad b_{2,n,l}(z) = \sum_{k = 0}^{l'} {n/2 - 1/2 + k \choose k} \delta^k(z).
\end{equation}
For odd $n$ one recovers the primal binary pseudo-splines, and for even $n$ the dual binary pseudo-splines; c.f.  \cite{Dyn.Hormann.Sabin.Shen08}.

\subsubsection{Ternary pseudo-splines}
For $m = 3$ one has $P_3(y) = \frac{4}{3} y$ and
\[ \GGG_{3,n}(y)
 =  \left( \frac{1}{1-\frac{4}{3}y} \right)^{n+1}
 = \sum_{k = 0}^\infty {n + k \choose k} \left(\frac{4}{3} y \right)^k. \]
Thus the ternary pseudo-spline of type $(n,l)$ with shift $\tau = n+1$ has symbol
\begin{equation}\label{eq:SymbolTernary}
a_{3,n,l}(z) = 3\left(\frac{1 + z + z^2}{3}\right)^{n+1} \!\!\!\!\!\!\!\!\!\cdot  b_{3,n,l}(z),\ \  b_{3,n,l}(z) = \sum_{k = 0}^{l'} {n + k \choose k} \left(\frac{4}{3} \delta(z) \right)^k \!\!\!.
\end{equation}

\subsubsection{Quaternary pseudo-splines}
For $m = 4$ one has $P_4(y) = 2y$ and, expanding as a binomial series and taking the Cauchy product,
\[ \GGG_{4,n} = (1-y)^{-(n+1)/2} \cdot \sum_{k = 0}^\infty {n+k \choose k} 2^k y^k = \sum_{k = 0}^\infty g_k y^k, \]
where
\[ g_k = \sum_{j = 0}^k {j + \frac{n-1}{2}\choose j} {n+k-j \choose k-j} 2^{k-j}. \]
Thus the quaternary pseudo-spline of type $(n,l)$ with shift $\tau = 3(n+1)/2$ has symbol
\begin{equation}\label{eq:SymbolQuaternary}
a_{4,n,l}(z) = 4\left(\frac{1 + z + z^2 + z^3}{4}\right)^{n+1}\cdot b_{4,n,l}(z),\quad b_{4,n,l}(z) = \sum_{k = 0}^{l'} g_k \delta^k (z) .
\end{equation}

\subsubsection{Small reproduction order $l$}\label{sec:symbol-small-l}
If $l' = 0$ then $b_{m,n,1} = 1$, and one recovers the well-known $m$-ary B-spline schemes. If $l' = 1$, then, modulo $\delta^2$,
\[ b_{m,n,3}
\equiv \left(1 + \frac12 (n+1)(1-\epsilon)\delta\right)\cdot \big(1 + (n+1) P_m(\delta) \big)
\equiv 1 + \frac16 (n+1)\left(m^2 - 1\right) \delta, \]
so that $b_{m,n,3}(z) = b_1 z^{-1} + b_0 + b_1 z$, with
\begin{equation}\label{eq:b0b1forl=3}
b_0 = 1 + \frac{\big(m^2 - 1 \big)(n+1)}{12},\qquad
b_1 = -\frac{\big(m^2 - 1 \big)(n+1)}{24}.
\end{equation}
Similarly it is straightforward to obtain explicit expressions for $b_{m,n,2l'+1}$, with $l' = 2,3,\ldots$

\subsubsection{$(2l'+2)$-point Dubuc-Deslauriers schemes}
For $l'\in \NN_0$ and $l = 2l' + 1$, the primal $m$-ary $(l+1)$-point Dubuc-Deslauriers scheme is the $m$-ary pseudo-spline of type $(2l' + 1, 2l' + 1)$. From the generating function, we obtain $b_{m,1,1}(z) = 1$ and
\begin{align}
b_{m,3,3} (z) & = 1 + \frac{2}{3}(m^2 - 1) \delta(z) 
                = 1 + \frac13(m^2 - 1) - \frac16 (m^2 - 1)(z + z^{-1}) \label{eq:symbol-m-ary-4point}\\
b_{m,5,5} (z) & = 1 + (m^2 - 1) \delta(z) + \frac{2}{15} (4m^4 - 5m^2 + 1)\delta^2(z)
\end{align}

\begin{remark}
Explicit expressions for the symbol of the general primal and dual $m$-ary $(2l' + 2)$-point Dubuc-Deslauriers scheme can be obtained by evaluating the Lagrange interpolant of degree $l$ to uniform data at appropriate points. To make this precise, consider the Lagrange basis polynomials
\[ L^{2l' + 1}_j(x) := \prod_{\substack{k = -l'\\k\neq j}}^{l'+1} \frac{x - k}{j - k},\qquad
   L^{2l'    }_j(x) := \prod_{\substack{k = -l'\\k\neq j}}^{l'  } \frac{x - k}{j - k}. \]
By definition of their refinement rules, the primal and dual $m$-ary $(2l' + 2)$-point Dubuc-Deslauriers scheme have centered symbols
\[
1 + \sum_{s = 1}^{m - 1} z^s \sum_{j = -l'}^{l' + 1} L^l_j\left(\frac{s}{m}\right) z^{-mj} \qquad \text{and}\qquad
\sum_{s = 0}^{m-1} z^s \sum_{j = -l'}^{l' + 1} L^l_j \left(\frac{2s + 1}{2m_0}\right) z^{-mj}.
\]
\end{remark}

\begin{remark}
Although in this paper we only consider pseudo-splines for which the derived symbol $b(z)$ is odd symmetric, we conjecture that dual $m$-ary $(2l' + 2)$-point Dubuc-Deslauriers schemes have symbol $a(z) = m\sigma_m^n(z) b(z)$, where $n = l = 2l' + 1$ and $b(z)$ is $\frac{1 + z}{2}$ multiplied by the Taylor polynomial of degree $l'$ to the generating function
\begin{equation}\label{eq:generatingeven}
\GGG_{m,n}(y)/\sqrt{1-y}
\end{equation}
evaluated at $\delta(z)$. This is verified symbolically in the worksheet for small $l$.
\end{remark}

\subsubsection{$(2l'+1)$-point interpolatory schemes}
For $l' \in \NN$ and odd arity $m = 2m' + 1$, consider the family \cite{Lian09} of $m$-ary $(2l'+1)$-point interpolatory schemes with symbol
\[
a(z) = \sum_{k = -m l' -m'}^{ml' + m'} a_k z^k, \qquad
 a_k = a_{-k} = \frac{\prod_{i = 1}^{l'-j} (im + k) \prod_{i = 1}^{l'+j} (im - k)}{m^{2l'}(l'-j)! (l'+j)!},
\]
for $k = 0, \ldots , m'$ and $j = 0$, or $k = mj - m', \ldots, mj + m'$ and $j = 1, \ldots, l'$. These are the $m$-ary pseudo-splines of type $(2l', 2l'+1)$ and shift $\tau = 0$. 

The family of ternary schemes in \cite{Zheng.Hu.Peng09}, depending on a parameter $u$, is the affine span of the above ternary $(2l'+1)$-point scheme (with $u = L^{2l'}_{-l'}\left(-1/3\right)$), for which it attains maximal order of reproduction, and the ternary $2l'$-point Dubuc-Deslauriers schemes (with $u = -L_{l'}^{2l'-1} \left(-1/3\right)$). Even more specifically, for $l' = 1$ one obtains the original scheme from \cite{Hassan.Dodgson02}.

\section{Regularity}\label{sec:regularity}
Let be given the scheme \eqref{eq:scheme} with symbol, after shifting, satisfying the conditions \eqref{eq:polynomialgeneration2} of polynomial generation of degree $r\geq 0$. In this equation, suppose that $b(z)$ is centered at zero and odd symmetric, so that its Fourier transform takes the form
\[ B(\xi) := b(\rme^{-i\xi}) = b_0 + 2\sum_{j=1}^p b_j \cos(j\xi), \qquad \xi\in \RR,\]
for some $p\geq 1$. 

\subsection{A recipe for computing the exact regularity}\label{sec:recipe}
Let $\bfM$ be the matrix defined by
\begin{equation}\label{eq:Mfolded}
\bfM = [m_{j,k}]_{j,k=0,\ldots,\lfloor \frac{p-1}{m-1} \rfloor},\qquad
m_{j,k} := \left\{
\begin{array}{ll}
b_j & k = 0,\\
b_{|j-mk|} + b_{j+mk} & k\geq 1,
\end{array}
\right.
\end{equation}
which can be interpreted as a `folded' submatrix of the subdivision matrix. See Table \ref{tab:M} for explicit expressions for $\bfM$ for various $m\geq 2$ and $p\geq 1$.

\begin{table}
\begin{tabular*}{\columnwidth}{@{\extracolsep{\stretch{1}}}*{4}{c}}
\toprule
$\bfM$  & $m = 2$ & $m = 3$ & $m = 4$
\\ \midrule \vspace{0.25em}
$p = 1$ & $[b_0]$ & $[b_0]$ & $[b_0]$
\\ \vspace{0.25em}
$p = 2$ & $\begin{bmatrix}b_0&2b_2\\b_1&b_1 \end{bmatrix}$ &
 $[b_0]$ & $[b_0]$
 \\ \vspace{0.25em}
$p = 3$ &
$\begin{bmatrix}b_0 & 2b_2 & 0\\ b_1 & b_1 + b_3 & b_3\\ b_2 & b_0 & b_2\end{bmatrix}$ & $\begin{bmatrix} b_0 & 2b_3\\ b_1 & b_2\end{bmatrix}$ & $[b_0]$
\\ \vspace{0.25em}
$p = 4$ &
$\begin{bmatrix}
b_0 & 2b_2 & 2b_4 & 0\\
b_1 & b_1 + b_3 & b_3 & 0\\
b_2 & b_0 + b_4 & b_2 & b_4\\
b_3 & b_1 & b_1 & b_3
\end{bmatrix}$ &
$\begin{bmatrix} b_0 & 2b_3\\ b_1 & b_2 + b_4\end{bmatrix}$
& $\begin{bmatrix} b_0 & 2b_4\\ b_1 & b_3\end{bmatrix}$	
\\ \vspace{0.25em}
$p = 5$ & 
$\begin{bmatrix}
b_0 &      2b_2 &      2b_4 &       0 &       0\\
b_1 & b_1 + b_3 & b_3 + b_5 &     b_5 &       0\\
b_2 & b_0 + b_4 &       b_2 &     b_4 &       0\\
b_3 & b_1 + b_5 &       b_1 &     b_3 &     b_5\\
b_4 &       b_2 &       b_0 &     b_2 &     b_4\\
\end{bmatrix}$ &
$\begin{bmatrix} b_0 & 2b_3 & 0 \\ b_1 & b_2 + b_4 & b_5\\ b_2 & b_1 + b_5 & b_4\end{bmatrix}$
& $\begin{bmatrix}b_0 & 2b_4\\ b_1 & b_3 + b_5 \end{bmatrix}$
\\
\bottomrule
\end{tabular*}

\caption{The matrix $\bfM$ in \eqref{eq:Mfolded} for various $m\geq 2$ and $p\geq 1$.}\label{tab:M}
\end{table}

Under the assumptions, the following theorem yields a quick method for computing the exact regularity of the scheme \eqref{eq:scheme}.

\begin{theorem}\label{thm:MainTheorem}
Suppose the matrix $\bfM$ has spectral radius $\rho > 1/m$.
\begin{enumerate}
\item If $B(\xi) \geq 0$ for all $\xi$, then the regularity of the scheme \eqref{eq:scheme} has lower bound
\[ r - \log_m(\rho).\]
\item If $B(\xi) > 0$ for all $\xi$, then this bound is optimal.
\end{enumerate}
\end{theorem}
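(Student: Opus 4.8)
The plan is to obtain the exact regularity from a joint spectral radius and then to collapse that joint spectral radius onto the ordinary spectral radius $\rho$ of the single matrix $\bfM$, the collapse being driven by the nonnegativity of $B$.

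First I would use the factorization $a = m\sigma_m^{r+1}b$ to split the smoothness of the cardinal limit function $\phi$ into two parts. In the Fourier domain $\widehat{\phi}(\xi)=\prod_{k\ge1}m^{-1}A(\xi/m^k)$ factors as $\left(\prod_{k\ge1}\sigma_m(\rme^{-i\xi/m^k})\right)^{r+1}\cdot\Pi_B(\xi)$, where the telescoping product equals $\left(\frac{1-\rme^{-i\xi}}{i\xi}\right)^{r+1}$ and supplies the baseline decay $|\xi|^{-(r+1)}$ — the $C^r$ smoothness of the degree-$r$ B-spline — while $\Pi_B(\xi):=\prod_{k\ge1}B(\xi/m^k)$ carries the correction. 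A growth estimate $\Pi_B(\xi)=O\big(|\xi|^{\log_m\rho}\big)$ then gives $\widehat{\phi}(\xi)=O\big(|\xi|^{-(r+1-\log_m\rho)}\big)$ and hence regularity $r-\log_m\rho$. For exact and optimal statements I would pass to the time domain and invoke the general theorem of the appendix, which expresses the exact H\"older regularity as $r-\log_m\mu$, with $\mu$ the joint spectral radius of the $m$ subdivision submatrices built from the derived symbol $b$ on the finite-dimensional invariant space determined by $\supp b$.

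The heart of the argument is to prove $\mu=\rho$. Here $B\ge0$ is essential: since the finiteness conjecture holds for the submatrices arising from schemes with nonnegative Fourier transform, the joint spectral radius is attained on a common nonnegative cone, so the family may be replaced by a single nonnegative transfer matrix. Exploiting the odd symmetry $b_{-j}=b_j$, I would fold the index range $\{-p,\dots,p\}$ onto $\{0,1,\dots,\lfloor(p-1)/(m-1)\rfloor\}$ by identifying $j$ with $-j$; in $(S_bf)_j=\sum_k b_{j-mk}f_k$ the $+k$ image contributes $b_{j-mk}=b_{|j-mk|}$ and the $-k$ image contributes $b_{j+mk}$ for $k\ge1$, while $k=0$ contributes the single term $b_j$, which is precisely the entry $m_{j,k}$ of $\bfM$. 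Thus $\bfM$ represents the transfer operator on the quotient, and Perron--Frobenius supplies a nonnegative eigenvector for $\rho$, giving $\mu=\rho$; the hypothesis $\rho>1/m$ ensures that this eigenvalue, rather than the inherent smoothness of the $\sigma_m$-factors, is the binding constraint.

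With $\mu=\rho$ in hand, part 1 follows because $B\ge0$ yields $\|\bfM^\ell\|\lesssim\rho^\ell$ up to polynomial factors, which controls the decay of the relevant iterated differences and gives the lower bound $r-\log_m\rho$. For part 2, strict positivity $B(\xi)>0$ upgrades $\bfM$ to a primitive nonnegative matrix, so its Perron eigenvector is strictly positive; the corresponding eigen-direction produces initial data whose limit function is genuinely no smoother than $r-\log_m\rho$, matching the upper bound and establishing optimality. I expect the main obstacle to be this middle step: simultaneously invoking the finiteness property and verifying that the symmetry folding preserves the spectral radius — that passing to the quotient introduces no spurious enlargement of the relevant invariant subspace and no loss of the extremal eigendirection.
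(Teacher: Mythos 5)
Your overall architecture (lower bound from decay of iterated difference masks, optimality from an extremal direction) matches the paper's, but the central step of your argument contains a genuine error. You write that $B\ge 0$ lets you ``replace the family by a single nonnegative transfer matrix'' and that ``Perron--Frobenius supplies a nonnegative eigenvector for $\rho$,'' and later that $B(\xi)>0$ ``upgrades $\bfM$ to a primitive nonnegative matrix.'' This is false: the hypothesis is nonnegativity of the Fourier transform $B(\xi)$, not of the mask coefficients $b_j$, and for pseudo-splines the $b_j$ are generically of mixed sign (e.g.\ $b_1=-\big(m^2-1\big)(n+1)/24<0$ in \eqref{eq:b0b1forl=3}), so $\bfM$ is not a nonnegative matrix and Perron--Frobenius does not apply to it. The mechanism that actually collapses the joint spectral radius onto $\rho$ is different: $B\ge 0$ makes $\bfb$ a positive definite sequence (Herglotz), which forces the iterated mask $b_\ell(z)=b(z)b(z^m)\cdots b(z^{m^{\ell-1}})$ to satisfy $\max_j|b_{\ell,j}|=b_{\ell,0}$ for every $\ell$ (Lemma~\ref{lem:rioul}). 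One then tracks only the central coefficient through the self-generating subvector $\bfb_{\ell+1}=\bfM\bfb_\ell$ of Lemma~\ref{lem:selfgenerating}, squeezes $b_{\ell,0}\le\Vert\bfM^\ell\Vert_\infty\le C\,b_{\ell,0}$, and obtains $\lim_\ell b_{\ell,0}^{1/\ell}=\rho$ directly, with no appeal to the finiteness conjecture or to any cone argument. Your ``fold by identifying $j$ with $-j$'' correctly reproduces the entries of $\bfM$ in \eqref{eq:Mfolded}, but without the Herglotz/Rioul step you have no control over the off-center iterated coefficients $b_{\ell,j}$, $j\neq 0$, and hence no bound $\max_j|b_{\ell,j}|\lesssim\rho^\ell$.

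The optimality argument also has a missing ingredient. Knowing that the differences $g^{[r]}_{\ell,0}=b_{\ell,0}$ decay no faster than $\rho^\ell$ does not by itself show the limit function is no smoother than $r-\log_m(\rho)$: one must be able to bound divided differences of the data by divided differences of the limit function, which requires $\ell^\infty$-stability of the integer translates of the cardinal function $\phi$. This is exactly where $B(\xi)>0$ (as opposed to $\ge 0$) enters in the paper: via the infinite product $\wphi(\xi)=\wphi_r(\xi)\prod_{\ell\ge1}B(\xi/m^\ell)$ and the Jia--Micchelli criterion \eqref{eq:sup}, comparison with the B-spline $\phi_r$ yields stability (Lemmas~\ref{lem:stable} and~\ref{lem:stable2}). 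Your proposal uses strict positivity only to claim primitivity of $\bfM$, which, as noted, is not available; the stability step needs to be supplied. Your Fourier-domain decay estimate for $\wphi$ in the first paragraph is a reasonable heuristic for the lower bound but is not how the paper proceeds (decay of $\wphi$ of order $|\xi|^{-(r+1-\log_m\rho)}$ would in any case only give regularity $r-\log_m\rho-\varepsilon$, not the sharp two-sided statement).
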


The proof is a straightforward but lengthy generalization of the binary case presented in the report \cite{Floater.Muntingh13}; for part a. see Theorem \ref{thm:MainTheoremLower} and for part b. see Theorem \ref{thm:MainTheoremStrict} in Appendix \ref{sec:AppendixB}.

\begin{remark}
In practice one takes $r$ maximal in the factorization \eqref{eq:polynomialgeneration2}, so that $\bfM$ has minimal size.
\end{remark}

\subsection{Regularity of pseudo-splines}
Now consider the pseudo-spline scheme defined by \eqref{eq:pseudosymbol}. By Lemma \ref{lem:generatingpositive} and since $\delta(\rme^{-i\xi}) = \sin^2(\xi/2) \geq 0$, it follows that the derived symbol $b_{m,n,l}(z)$ in \eqref{eq:pseudosymbol} satisfies $b_{m,n,l}\big(\rme^{-i\xi}\big) = B_{m,n,l}(\xi) > 0$ for all $\xi$. Hence, whenever the matrix $\bfM$ has spectral radius $\rho > 1/m$, the scheme has exact regularity $r - \log_m (\rho)$.

In the next sections we compute these regularities explicitly for some special cases. In each case, the regularity is computed in a fraction of a second, while a similar numerical computation based on the Joint Spectral Radius can take a significant amount of time (unless more sophisticated techniques are applied, as in \cite{Moeller15}).

\begin{table}
\begin{tabular*}{\columnwidth}{@{ }@{\extracolsep{\stretch{1}}}*{10}{lc|cccccccc}@{ }}
\toprule
&  & & $l'=0$ & & $l'=1$ & & $l'=2$ & & $l'=3$ \\
\midrule
$ m = 2$
& $n = 1$ & \ps210 & \phantom{0}1 \\
& $n = 2$ & \ps220 & \phantom{0}2 & \ps221 & 1.19265\\
& $n = 3$ & \ps230 & \phantom{0}3 & \ps231 & 2\phantom{.00000}\\
& $n = 4$ & \ps240 & \phantom{0}4 & \ps241 & 2.83007 & \ps242 & 2.10558\\
& $n = 5$ & \ps250 & \phantom{0}5 & \ps251 & 3.67807 & \ps252 & 2.83007\\
& $n = 6$ & \ps260 & \phantom{0}6 & \ps261 & 4.54057 & \ps262 & 3.57723 & \ps263 & 2.87602\\
& $n = 7$ & \ps270 & \phantom{0}7 & \ps271 & 5.41504 & \ps272 & 4.34379 & \ps273 & 3.55113\\
\midrule
$m = 3$
& $n = 1$ & \ps310 & \phantom{0}1\\
& $n = 2$ & \ps320 & \phantom{0}2 & \ps321 & 1\phantom{.00000}\\
& $n = 3$ & \ps330 & \phantom{0}3 & \ps331 & 1.81734 \\
& $n = 4$ & \ps340 & \phantom{0}4 & \ps341 & 2.66528 & \ps342 & 1.57641 \\
& $n = 5$ & \ps350 & \phantom{0}5 & \ps351 & 3.53503 & \ps352 & 2.31986 \\
& $n = 6$ & \ps360 & \phantom{0}6 & \ps361 & 4.42110 & \ps362 & 3.09466 & \ps363 & 1.88409\\
& $n = 7$ & \ps370 & \phantom{0}7 & \ps371 & 5.31986 & \ps372 & 3.89404 & \ps373 & 2.58999 \\
\midrule
$m = 4$
& $n = 1$ & \ps410 & \phantom{0}1\\
& $n = 2$ & \ps420 & \phantom{0}2 & \ps421 & 0.87604\\
& $n = 3$ & \ps430 & \phantom{0}3 & \ps431 & 1.70752\\
& $n = 4$ & \ps440 & \phantom{0}4 & \ps441 & 2.57101 & \ps442 & 1.32536\\
& $n = 5$ & \ps450 & \phantom{0}5 & \ps451 & 3.45627 & \ps452 & 2.09955\\
& $n = 6$ & \ps460 & \phantom{0}6 & \ps461 & 4.35730 & \ps462 & 2.90432 & \ps463 & 1.60191\\
& $n = 7$ & \ps470 & \phantom{0}7 & \ps471 & 5.27028 & \ps472 & 3.73236 & \ps473 & 2.35154\\
\bottomrule
\end{tabular*}
\caption{Cardinal limit functions and regularities, rounded to five decimals, of binary, ternary, and quaternary pseudo-splines of type ($n, 2l'+1)$ and shift $\tau = (m-1)(n+1)/2$.}\label{tab:m-aryps}
\end{table}

\subsubsection{Binary pseudo-splines}
Using the explicit expression \eqref{eq:SymbolBinary} for the symbol of the binary pseudo-spline scheme, we apply the method in Section \ref{sec:recipe} to compute its regularity.

Table \ref{tab:m-aryps} shows the regularity of binary primal ($n$ odd) and dual ($n$ even) pseudo-splines \cite{Dyn.Hormann.Sabin.Shen08}. The first column corresponds to the binary B-spline scheme of degree $n$. The top slanted diagonal $n = 2l'$ corresponds to the $(2l' + 1)$-point scheme from \cite{Lian09}. Below that, the slanted diagonal $n = 2l' + 1$ (resp. $n = 2'l' + 2$) corresponds to the primal (resp. dual) ($2l'+2$)-point Dubuc-Deslauriers scheme \cite{Dubuc86, Deslauriers.Dubuc89, Dyn.Floater.Hormann04}.

For $n \geq 2l' + 1$, these exact regularities agree (up to three decimals) with the lower bounds presented in Tables 2 and 3 of \cite{Dong.Dyn.Hormann10}, established numerically using a Joint Spectral Radius computation. The regularity for the binary 3-point scheme, obtained by taking $(n,2l'+1) = (2,3)$, agrees with the exact Joint Spectral Radius computation in \cite[\S 7.2]{Moeller15}.

\subsubsection{Ternary pseudo-splines}
Using the explicit expression \eqref{eq:SymbolTernary} for the symbol of the ternary pseudo-spline scheme, we again apply the method in Section \ref{sec:recipe} to compute its regularity, shown in the second part of Table \ref{tab:m-aryps}. 

The first column corresponds to the ternary B-spline scheme of degree $n$ (cf. \cite{Hassan.Dodgson02} for $(n,l) = (3,1)$). The slanted diagonal corresponds to the primal ternary ($2l'+2$)-point Dubuc-Deslauriers scheme \cite{Deslauriers.Dubuc89}. 

The regularities for the primal ternary 4-point ($n = l = 3$) and 6-point ($n=l=5$) Dubuc-Deslauriers schemes agree with the lower bounds in \cite[Table 4.2]{Hassan05}.

\subsubsection{Quaternary pseudo-splines}
Using the explicit expression \eqref{eq:SymbolQuaternary} for the symbol of the quaternary pseudo-spline scheme, we again apply the method in Section \ref{sec:recipe} to compute its regularity, shown in the third part of Table \ref{tab:m-aryps}.

The first column corresponds to the quaternary B-spline scheme of degree $n$. The slanted diagonal corresponds to the primal quaternary ($2l'+2$)-point Dubuc-Deslauriers scheme \cite{Deslauriers.Dubuc89}. 

The regularity of the quaternary 3-point scheme ($m=4$, $n=2$ and $l=3$) agrees with the exact Joint Spectral Radius computation in \cite[\S 7.8]{Moeller15}.

\subsubsection{Small reproduction order $l$}
If $l' = 0$ one recovers the well-known fact that the $m$-ary B-spline scheme of degree $n$ has regularity $n$ (i.e., the B-spline of degree $n$ has regularity $n - \varepsilon$ for any $\varepsilon > 0$).

If $l' = 1$ and $n\geq 2$, then the folded matrix $\bfM$ has size $\lfloor \frac{l' - 1}{m - 1} \rfloor + 1 = 1$, and the regularity of the scheme can be expressed in terms of the central coefficient $b_0$ from \eqref{eq:b0b1forl=3} as
\begin{equation}\label{eq:lp1}
n - \log_m(b_0) = n - 2 - \log_m\left( \frac{1}{m^2} + \frac{n+1}{12}\left(1 - \frac{1}{m^2}\right) \right).
\end{equation}
In the limit $m\to \infty$, the regularity approaches $n - 2$. Moreover, the regularity decreases with $m$ for $n < 11$, increases with $m$ for $n > 11$, and stays constant at $9$ for $n = 11$. 

Next, consider any column $l'\geq 1$, with $m>l'$ and $n\geq 2l'$. Since $m > l'$, the folded matrix has dimension $\lfloor \frac{l'-1}{m-1} \rfloor + 1 = 1$, and the scheme has regularity $n - \log_m(b_0)$, with $b_0 z^0$ the constant term of $\GGG_{m, n}\big(\delta(z)\big) \mod \delta^{l'+1}$. Each monomial $\delta^k$ of $P_m(\delta)$ has as a coefficient a polynomial of degree $2k$ in $m$. It follows that $b_0$ is a polynomial of degree $2l'$ in $m$, and the regularity approaches
\[ n-\lim_{m\to \infty} \log_m(b_0) = n-2l' - \lim_{m\to \infty} \log_m(m^{-2l'} b_0) = n - 2l'\]
in the limit $m\to \infty$.

\begin{figure}
\includegraphics[scale = 0.48]{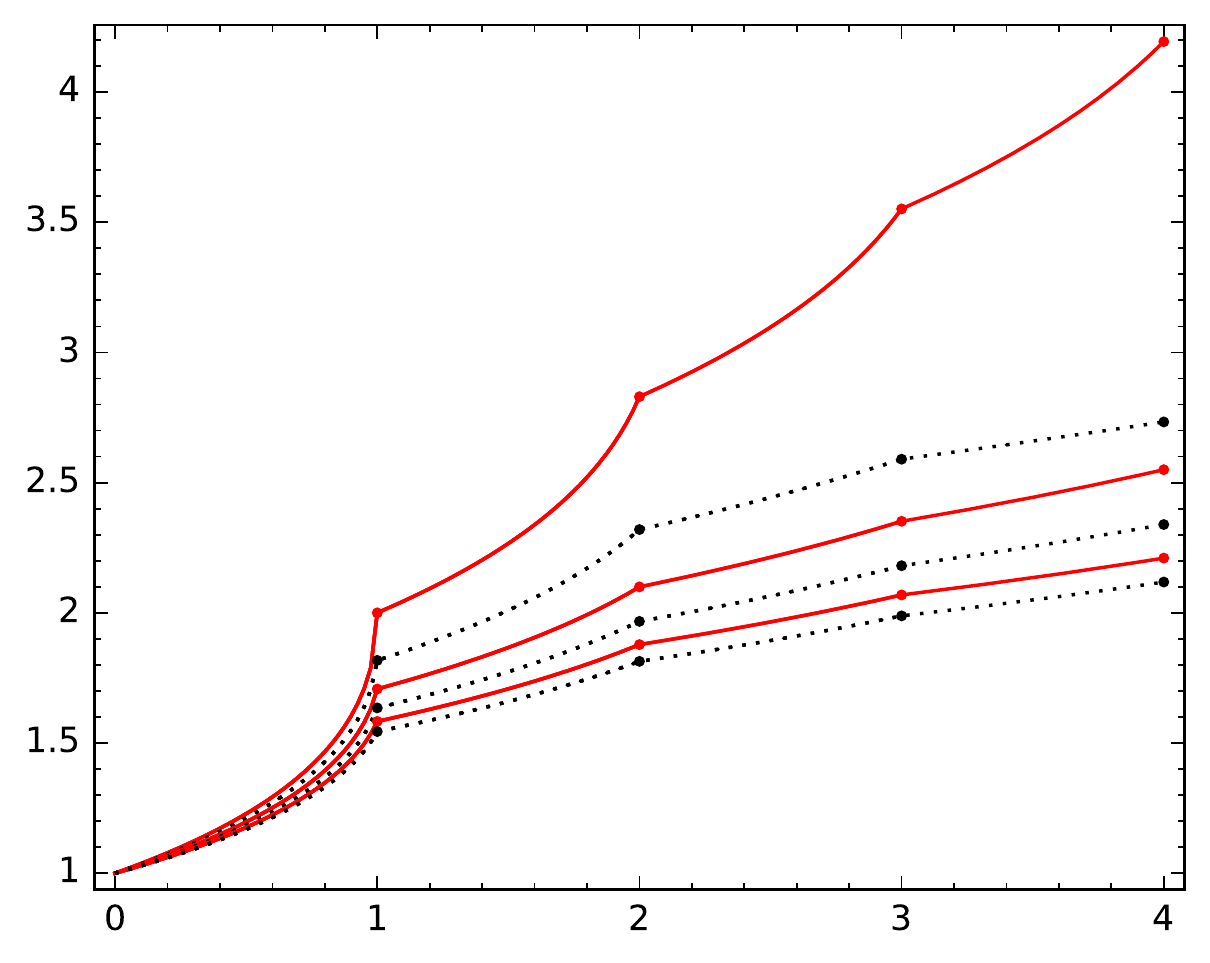}
\includegraphics[scale = 0.48]{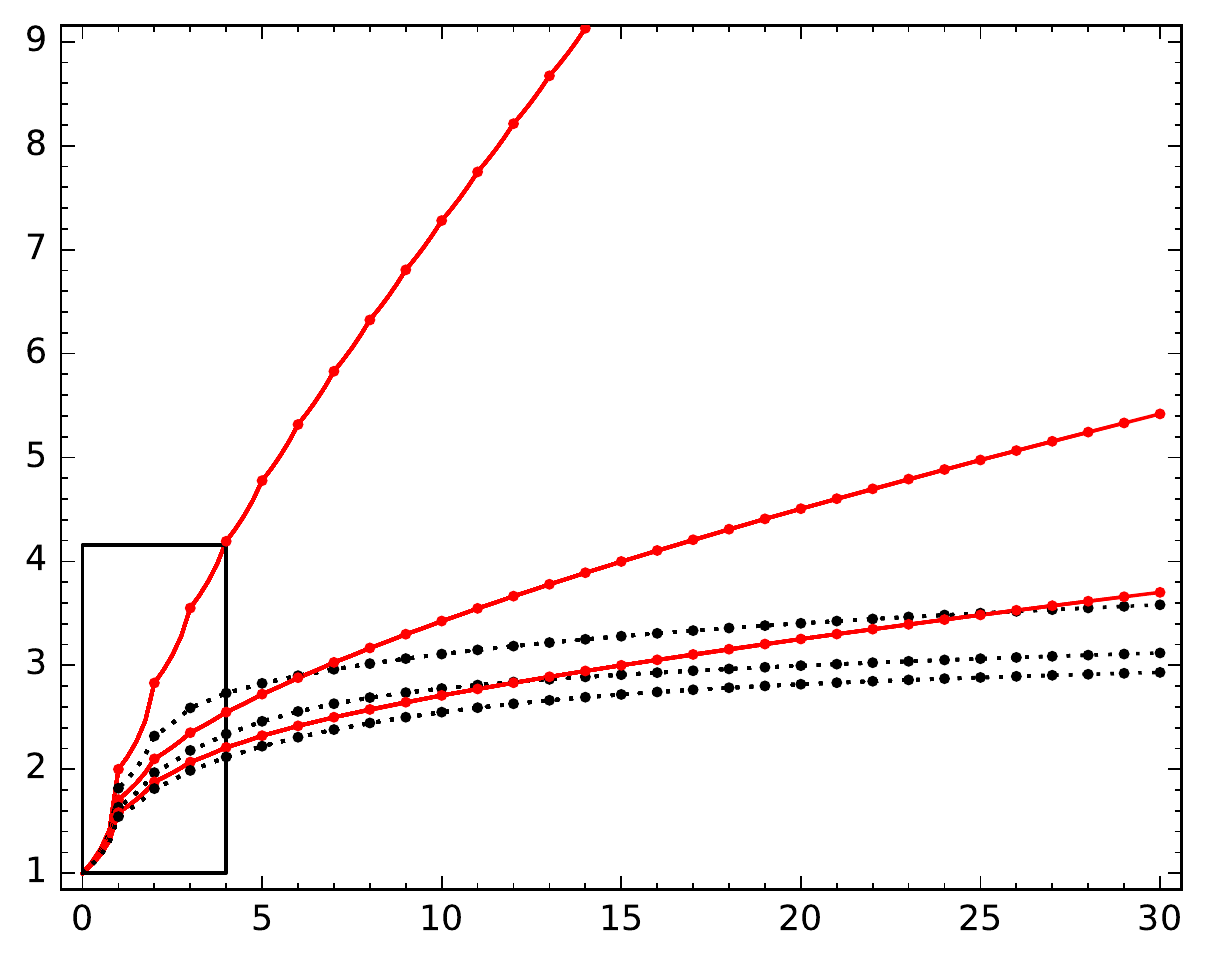}
\caption{For $m = 2m' + \varepsilon = 2, \ldots, 7$ and $l' = 0, \ldots, 4$ (left) and $l' = 0, \ldots, 30$ (right), the regularity of primal $2m'$-ary (solid) and $(2m'+1)$-ary (dotted) $(2l'+2)$-point Dubuc-Deslauriers schemes versus $l'$, interpolated by the regularities of successive tension parameter schemes.}\label{fig:RegularityDD}
\end{figure}

\subsubsection{Primal Dubuc-Deslauriers with tension}
By the linearity of the Fourier transform, any convex combination of masks with positive Fourier transform has positive Fourier transform. Thus Theorem~\ref{thm:MainTheorem} applies to convex combinations of the $m$-ary $2l'$-point and $(2l'+2)$-point primal Dubuc-Deslauriers schemes.

Because the method in Section \ref{sec:recipe} is very fast, it is possible to quickly compute the regularity for a large number of schemes of large size. Figure \ref{fig:RegularityDD} shows the regularity of these schemes for $m = 2,\ldots,7$. In the domain $l' = 0, \ldots, 4$, the left figure indicates that the regularity decreases (pointwise) when the arity increases. However, in the domain $l' = 0, \ldots, 30$ the right figure indicates a different picture, with the even arity (drawn solid) approaching asymptotically a steeper slope than the schemes with odd arity (drawn dotted).

For binary schemes, the slope of the top curve approaches the known value of $2 - \log_2(3)\approx 0.415$ \cite{Dong.Shen07}. Although affine, non-convex combinations of masks with positive Fourier transform do not necessarily have positive Fourier transform, in which case the method of Section~\ref{sec:recipe} is no longer valid, the regularity of such schemes has been analysed by other means \cite{Hechler.Moessner.Reif09} \cite[\S 7.3]{Moeller15}.

\begin{example}
A particular case is the classical 4-point scheme with tension~\cite{Dyn.Levin.Gregory87}. More generally, consider the $m$-ary 4-point scheme with tension with symbol
\[ a_\omega(z) := (1-\omega)a_{m,1,1}(z) + \omega a_{m,3,3}(z) = m\sigma_m^2(z) b_\omega(z),\qquad \omega \in [0,1], \]
where, using \eqref{eq:b0b1forl=3} and shifting to a centered symbol, 
\begin{align*}
b_\omega(z) 
       & = (1 - \omega) + \omega \frac{(1 + \cdots + z^{m-1})^2}{m^2 z^{m-1}} \left[ 1 + \frac{m^2 - 1}{3} - \frac{m^2 - 1}{6} \left(z^{-1} + z^{+1}\right)\right].
\end{align*}
Since $p = m$, a calculation yields the folded matrix
\[\bfM
= \begin{bmatrix} b_0 & 2b_m\\ b_1 & b_{m-1} \end{bmatrix}
= \frac{1}{m^2}\begin{bmatrix} m^2 - \frac13 \omega(m-1)(2m-1) & - \frac13 \omega (m^2 - 1)\\ \omega (m-1) & \omega \end{bmatrix}.
\]
It follows that, with
\[ D := (4m^4 - 24m^3 + 37m^2 - 12m + 4)\omega^2 - 6(2m^4 - 3m^3 + 4m^2)\omega + 9m^4, \]
the scheme has regularity
\[ 1 - \log_m\big(\rho(\bfM)\big),\qquad \rho(\bfM) =  \frac{3m^2 - 2m^2\omega + 3m\omega + 2\omega + \sqrt{D}}{6m^2}, \]
which is plotted in Figure \ref{fig:RegularityDD} from $l' = 0$ to $l' = 1$.
\end{example}

\section{Conclusion}\label{sec:conclusion}
Using a generating function approach, we have derived the symbol of the symmetric $m$-ary pseudo-spline of type $(n, 2l' + 1)$ and shift $\tau = (m-1)(n+1)/2$. It was shown how various schemes in the literature appear as special cases. For such pseudo-spline schemes, the derived mask is odd symmetric and has positive Fourier transform, making it possible to compute the exact regularity rapidly in terms of the spectral radius of a matrix.

In the future it would be interesting to show that the generating function \eqref{eq:generatingeven} can be used to define pseudo-splines with even symmetric derived symbol $b(z)$. An open question is whether it is possible to determine the regularity exactly for such schemes, which seems to be a very hard problem. Finally it remains to be seen to what length these results can be generalized to non-symmetric pseudo-splines.

\section*{Acknowledgments}
I am grateful to Maria Charina and Michael Floater for the many discussions on the topic of this paper. This projected was supported by a FRINATEK grant, project number 222335, from the Research Council of Norway.


\begin{bibdiv}
\begin{biblist}

\bib{Cavaretta.Dahmen.Micchelli91}{article}{
   author={Cavaretta, Alfred S.},
   author={Dahmen, Wolfgang},
   author={Micchelli, Charles A.},
   title={Stationary subdivision},
   journal={Mem. Amer. Math. Soc.},
   volume={93},
   date={1991},
   number={453},
   pages={vi+186},
}

\bib{Charina14}{article}{
   author={Charina, Maria},
   title={Finiteness conjecture and subdivision},
   journal={Applied and Computational Harmonic Analysis},
   volume={36},
   number={3},
   pages={522--526},
   year={2014},
}

\bib{Conti.Gemignani.Romani16}{article}{
   author={Conti, Costanza},
   author={Gemignani, Angelo},
   author={Romani, Lucia},
   title={Exponential pseudo-splines: Looking beyond exponential B-splines},
   journal={Journal of Mathematical Analysis and Applications},
   date={2016},
   volume={439},
   number={1},
   pages={32--56},
}

\bib{Conti.Hormann11}{article}{
   author={Conti, Costanza},
   author={Hormann, Kai},
   title={Polynomial reproduction for univariate subdivision schemes of any
   arity},
   journal={J. Approx. Theory},
   volume={163},
   date={2011},
   number={4},
   pages={413--437},
   issn={0021-9045},
}

\bib{Daubechies.Guskov.Sweldens99}{article}{
   author={Daubechies, Ingrid},
   author={Guskov, Igor},
   author={Sweldens, Wim},
   title={Regularity of irregular subdivision},
   journal={Constr. Approx.},
   volume={15},
   date={1999},
   number={3},
   pages={381--426},
   issn={0176-4276},
}

\bib{Daubechies.Han.Ron.Shen03}{article}{
   author={Daubechies, Ingrid},
   author={Han, Bin},
   author={Ron, Amos},
   author={Shen, Zuowei},
   title={Framelets: MRA-based constructions of wavelet frames},
   journal={Appl. Comput. Harmon. Anal.},
   volume={14},
   date={2003},
   number={1},
   pages={1--46},
   issn={1063-5203},
}

\bib{Deng.Hormann14}{article}{
   author={Deng, Chongyang}
   author={Hormann, Kai}
   title={Pseudo-Spline Subdivision Surfaces}
   journal={Computer Graphics Forum},
   volume={33},
   number={5},
   issn={1467-8659},
   year={2014},
   url={http://dx.doi.org/10.1111/cgf.12448},
   pages={227--236},
}

\bib{Deslauriers.Dubuc89}{article}{
   author={Deslauriers, Gilles},
   author={Dubuc, Serge},
   title={Symmetric iterative interpolation processes},
   journal={Constr. Approx.},
   volume={5},
   date={1989},
   number={1},
   pages={49--68},
   issn={0176-4276},
}

\bib{Dyn.Levin.Gregory87}{article}{
   title={A 4-point interpolatory subdivision scheme for curve design},
   author={Dyn, Nira},
   author={Levin, David},
   author={Gregory, John A.},
   journal={Computer Aided Geometric Design}
   volume = {4},
   number = {4},
   pages = {257--268},
   year = {1987}
}

\bib{Dong.Dyn.Hormann10}{article}{
   author={Dong, Bin},
   author={Dyn, Nira},
   author={Hormann, Kai},
   title={Properties of dual pseudo-splines},
   journal={Appl. Comput. Harmon. Anal.},
   volume={29},
   date={2010},
   number={1},
   pages={104--110},
   issn={1063-5203},
}

\bib{Dong.Shen06}{article}{
   author={Dong, Bin},
   author={Shen, Zuowei},
   title={Linear independence of pseudo-splines},
   journal={Proc. Amer. Math. Soc.},
   volume={134},
   date={2006},
   number={9},
   pages={2685--2694},
   issn={0002-9939},
}

\bib{Dong.Shen07}{article}{
   author={Dong, Bin},
   author={Shen, Zuowei},
   title={Pseudo-splines, wavelets and framelets},
   journal={Appl. Comput. Harmon. Anal.},
   volume={22},
   date={2007},
   number={1},
   pages={78--104},
   issn={1063-5203},
}

\bib{Dubuc86}{article}{
   author={Dubuc, Serge},
   title={Interpolation through an iterative scheme},
   journal={J. Math. Anal. Appl.},
   volume={114},
   date={1986},
   number={1},
   pages={185--204},
   issn={0022-247X},
}

\bib{Dyn.Floater.Hormann04}{article}{
   author={Dyn, Nira},
   author={Floater, Michael S.},
   author={Hormann, Kai},
   title={A $C^2$ four-point subdivision scheme with fourth order
   accuracy and its extensions},
   conference={
      title={Mathematical methods for curves and surfaces: Troms\o\ 2004},
   },
   book={
      series={Mod. Methods Math.},
      publisher={Nashboro Press, Brentwood, TN},
   },
   date={2005},
   pages={145--156},
}

\bib{Dyn.Hormann.Sabin.Shen08}{article}{
   author={Dyn, Nira},
   author={Hormann, Kai},
   author={Sabin, Malcolm A.},
   author={Shen, Zuowei},
   title={Polynomial reproduction by symmetric subdivision schemes},
   journal={J. Approx. Theory},
   volume={155},
   date={2008},
   number={1},
   pages={28--42},
   issn={0021-9045},
}

\bib{Dyn.Levin02}{article}{
   author={Dyn, Nira},
   author={Levin, David},
   title={Subdivision schemes in geometric modelling},
   journal={Acta Numer.},
   volume={11},
   date={2002},
   pages={73--144},
   issn={0962-4929},
}

\bib{Floater11}{article}{
   author={Floater, Michael S.},
   title={A piecewise polynomial approach to analyzing interpolatory
   subdivision},
   journal={J. Approx. Theory},
   volume={163},
   date={2011},
   number={11},
   pages={1547--1563},
   issn={0021-9045},
}

\bib{Floater.Muntingh13}{article}{
   author={Floater, Michael S.},
   author={Muntingh, Georg},
   title={Exact regularity of pseudo-splines},
   eprint={http://arxiv.org/pdf/1209.2692.pdf}
}

\bib{Floater.Siwek13}{article}{
   author={Floater, Michael S.},
   author={Siwek, Bart{\l}omiej P.},
   title={Analysis of Hermite subdivision using piecewise polynomials},
   journal={BIT Numerical Mathematics},
   year={2013},
   volume={53},
   number={2},
   pages={397--409},
   issn={1572-9125},
   url={http://dx.doi.org/10.1007/s10543-012-0418-9}
}

\bib{Han02}{article}{
   author={Han, Bin},
   title={Computing the smoothness exponent of a symmetric multivariate refinable function},
   journal = {SIAM J. Matrix Anal. Appl.},
   volume = {24},
   number = {3},
   month = {mar},
   year = {2002},
   issn = {0895-4798},
   pages = {693--714},
   url = {http://dx.doi.org/10.1137/S0895479801390868},
}

\bib{Han.Jia98}{article}{
   author={Han, Bin},
   author={Jia, Rong-Qing},
   title={Multivariate refinement equations and convergence of subdivision
   schemes},
   journal={SIAM J. Math. Anal.},
   volume={29},
   date={1998},
   number={5},
   pages={1177--1199 (electronic)},
   issn={0036-1410},
}

\bib{Hassan05}{thesis}{
   author = {Hassan, Mohamed F.},
   title = {Multiresolution in Geometric Modelling: Subdivision Mark Points and Ternary Subdivision},
   type = {phd},
   date = {2005},
   organization={University of Cambridge},
}

\bib{Hassan.Dodgson02}{article}{
   author = {Hassan, Mohamed F.},
   author = {Dodgson, Neil A.},
   title = {Ternary and three-point univariate subdivision schemes}
   conference={
      title={Curve und surface fitting, Saint-Malo 2002. Fifth international conference on curves and surfaces},
   },
   book={
      publisher={Nashboro Press, Brentwood, TN},
   },
   date={2003},
   pages={199--208},
}


\bib{Hechler.Moessner.Reif09}{article}{
   title = {C1-continuity of the generalized four-point scheme},
   author = {Hechler, Jochen},
   author = {M\"o\ss ner, Bernhard},
   author = {Reif, Ulrich},
   journal = {Linear Algebra and its Applications},
   volume = {430},
   number = {11},
   pages = {3019--3029},
   year = {2009},
}

\bib{Ivrissimtzis.Sabin.Dodgson04}{article}{
   author={Ivrissimtzis, Ioannis},
   author={Sabin, Malcolm A.},
   author={Dodgson, Neil A.},
   title={On the support of recursive subdivision},
   journal={ACM Transactions on Graphics},
   volume={23}, 
   number={4}, 
   pages={1043--1060},
   date={October 2004}
}

\bib{Jia.Micchelli90}{article}{
   author={Jia, Rong Qing},
   author={Micchelli, Charles A.},
   title={Using the refinement equations for the construction of
   pre-wavelets. II. Powers of two},
   conference={
      title={Curves and surfaces},
      address={Chamonix-Mont-Blanc},
      date={1990},
   },
   book={
      publisher={Academic Press, Boston, MA},
   },
   date={1991},
   pages={209--246},
}

\bib{Ko.Lee.Yoon07}{article}{
   author={Ko, Kwan Pyo},
   author={Lee, Byung-Gook},
   author={Yoon, Gang Joon},
   title={A ternary 4-point approximating subdivision scheme},
   journal={Appl. Math. Comput.},
   volume={190},
   date={2007},
   number={2},
   pages={1563--1573},
   issn={0096-3003},
}

\bib{Lian09}{article}{
   author={Lian, Jian-Ao},
   title={On $\alpha$-ary subdivision for curve design. III. $2m$-point and $(2m+1)$-point interpolatory schemes},
   journal={Applications and Applied Mathematics},
   volume={4},
   number={2},
   pages={434--444},
   date={2009}
}

\bib{Moeller15}{thesis}{
   author = {Claudia M\"oller},
   title={A New Strategy for Exact Determination of the Joint Spectral Radius},
   type={phd},
   date={2015},
   organization={Technische Universit\"at Darmstadt},
   eprint={http://tuprints.ulb.tu-darmstadt.de/4603/}
}

\bib{Moeller.Reif14}{article}{
   author = {Claudia M\"oller},
   author = {Ulrich Reif},
   title = {A tree-based approach to joint spectral radius determination},
   journal = {Linear Algebra and its Applications},
   volume = {463},
   pages = {154--170},
   date = {2014},
}

\bib{WebsiteGeorg}{article}{
   author={Muntingh, Georg},
   title={Personal Website},
   eprint={https://sites.google.com/site/georgmuntingh/academics/software}
}

\bib{Mustafa.Khan09}{article}{
   author={Mustafa, Ghulam},
   author={Khan, Faheem},
   title={A new 4-point $C^3$ quaternary approximating subdivision
   scheme},
   journal={Abstr. Appl. Anal.},
   date={2009},
   pages={Art. ID 301967, 14},
   issn={1085-3375},
}

\bib{Rioul92}{article}{
   author={Rioul, Olivier},
   title={Simple regularity criteria for subdivision schemes},
   journal={SIAM J. Math. Anal.},
   volume={23},
   date={1992},
   number={6},
   pages={1544--1576},
   issn={0036-1410},
}

\bib{Zheng.Hu.Peng09}{article}{
   author={Zheng, Hongchan},
   author={Hu, Meigui},
   author={Peng, Guohua},
   title={Constructing $(2n-1)$-point ternary interpolatory subdivision schemes by using variation of constants},
   conference={
      title={International Conference on Computational Intelligence and Software Engineering (CiSE '09)},
   },
   date = {2009},
}
\end{biblist}
\end{bibdiv}

\appendix 

\newpage
\section{Regularity of $m$-ary subdivision}\label{sec:AppendixB}

It is well known \cite{Rioul92, Han02} that for symmetric interpolatory schemes with positive Fourier transform, it is possible to determine the H\"older regularity exactly. In the report \cite{Floater.Muntingh13} it was shown that this is possible for non-interpolatory binary schemes as well. In this appendix we show that these results generalize to the general $m$-ary scheme \eqref{eq:scheme} (cf. \cite{Hassan05} for the ternary case). This is related to results described in \cite{Charina14, Moeller15, Moeller.Reif14}, which show that the underlying mathematical reason for the correctness of the method is the validity of the finiteness conjecture for the joint spectral radius of subdivision submatrices derived from schemes with positive Fourier transform.

In this appendix we suppose that $a(z)$ satisfies the conditions \eqref{eq:polynomialgeneration2} for polynomial generation up to some degree $r\ge 0$, and, after shifting the coefficients $a_k$ as necessary, that the mask $\bfb = (b_j)_j$ corresponding to $b(z)$ is odd symmetric and centered at zero, i.e.,
\begin{equation}\label{eq:bform}
 \bfb = [b_p, \ldots, b_1, b_0, b_1,\ldots, b_p],
  \qquad b_p\neq 0,
\end{equation}
for some $p \ge 0$. Then the Fourier transform of $\bfb$,
\[ B(\xi) := b(\rme^{-i\xi}) = b_0 + 2 \sum_{j = 1}^p b_j \cos(j\xi), \qquad \xi \in \RR, \]
is real and periodic with period $2\pi$.

\subsection{Regularity as a decay rate of differences of the data}\label{sec:reg}

The regularity of the limit function $f$ is related to the decay rate of divided differences of the scheme. For each integer $s \ge 0$,
let $f_{\ell,j}^{[s]}$ denote the divided difference of the values $f_{\ell,j-s},\ldots,f_{\ell,j}$ at the corresponding $m$-adic points $m^{-\ell}(j-s),\ldots,m^{-\ell}j$. That is,
\begin{equation}\label{eq:recurse}
 f_{\ell,j}^{[0]} = f_{\ell,j},\qquad  f_{\ell,j}^{[s]} = \frac{m^\ell}{s} \left(f_{\ell,j}^{[s-1]} - f_{\ell,j-1}^{[s-1]}\right), \qquad s\ge 1.
\end{equation}
Under condition \eqref{eq:polynomialgeneration},
there is a scheme for the $f_{\ell,j}^{[s]}$ for $s=0,\ldots,r+1$. Writing
\[ a^{[s]}(z) = \sum_j a_j^{[s]} z^j := \frac{a(z)}{\sigma_m^s(z)},\qquad f_\ell^{[s]}(z) := \sum_j f_{\ell,j}^{[s]} z^j, \]
this scheme takes the equivalent forms
\begin{equation}\label{eq:derivedschemea}
f_{\ell + 1, j}^{[s]} = \sum_k a_{j-mk}^{[s]} f_{\ell,k}^{[s]}, \qquad f_{\ell+1}^{[s]}(z) = a^{[s]}(z) f_\ell^{[s]}(z^m).
\end{equation}

Consider the differences $g_{\ell,j}^{[r]}$ (of the divided differences) of the data and the corresponding symbol, defined by
\[ g_{\ell,j}^{[r]} := f_{\ell,j}^{[r]} - f_{\ell,j-1}^{[r]},\qquad g_\ell^{[r]}(z) := \sum_j g_{\ell,j}^{[r]} z^j. \]
The following lemma relates the decay rate of $g_{\ell,j}^{[r]}$ to the regularity of the limit function $f$. It was shown to hold for binary schemes in \cite[Theorem 4.9]{Dyn.Levin02} and for ternary schemes in \cite[Theorem 3.4.4]{Hassan05}, but also holds for schemes with general arity $m$.

\begin{lemma}
Suppose that, for large enough $\ell$,
\begin{equation}\label{eq:growth}
|g^{[r]}_{\ell,j}| \le K\lambda^\ell,
\end{equation}
for some constants $K$ and $\lambda < 1$. Then $f^{(r)} \in C^0$. Moreover, if $1/m < \lambda < 1$, then $f^{(r)} \in C^{-\log_m(\lambda)}$.
\end{lemma}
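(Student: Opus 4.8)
The plan is to realize the normalized divided differences as the slopes of a ladder of piecewise linear functions and then to integrate back up to $f$. For each $s = 0,1,\ldots,r$, let $g^{(s)}_\ell$ be the piecewise linear interpolant of the data $s!\,f^{[s]}_{\ell,j}$ at the nodes $t_{\ell,j}$, so that $g^{(0)}_\ell = L_\ell$. A direct computation with the recursion \eqref{eq:recurse} shows that the piecewise constant slope of $g^{(s)}_\ell$ on the interval of length $m^{-\ell}$ ending at $t_{\ell,j}$ equals $(s+1)!\,f^{[s+1]}_{\ell,j}$, i.e.\ exactly the node value interpolated by $g^{(s+1)}_\ell$. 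Hence, once I know that each $g^{(s)}_\ell$ converges uniformly to a continuous limit $G_s$, integrating this slope relation and letting $\ell\to\infty$ gives $G_s' = G_{s+1}$; since $G_0 = f$, this yields $f\in C^r$ with $f^{(s)} = G_s$, and in particular $f^{(r)} = G_r \in C^0$, proving the first claim.

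The uniform convergence of the $g^{(s)}_\ell$ I would obtain by a downward induction on $s$, driven by a \emph{local averaging estimate}. Because $a^{[s]}(z) = a(z)/\sigma_m^s(z)$ still satisfies the convergence condition \eqref{eq:convergence1}, its weights $a^{[s]}_{j-mk}$ sum to $1$ over $k$, so \eqref{eq:derivedschemea} writes $f^{[s]}_{\ell+1,j} - f^{[s]}_{\ell,k} = \sum_{k'} a^{[s]}_{j-mk'}\big(f^{[s]}_{\ell,k'} - f^{[s]}_{\ell,k}\big)$ for the index $k$ nearest $j/m$. As the mask has finite support, only finitely many $k'$ appear and each difference $f^{[s]}_{\ell,k'} - f^{[s]}_{\ell,k}$ is a bounded sum of the quantities $g^{[s]}_{\ell,\cdot}$; therefore $\|g^{(s)}_{\ell+1} - g^{(s)}_\ell\|_\infty \le C_s \max_j |g^{[s]}_{\ell,j}|$ with $C_s$ depending only on the mask, \emph{not} on the (roughly $m^\ell$) number of active indices. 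For the base case $s = r$ the hypothesis \eqref{eq:growth} gives $\max_j|g^{[r]}_{\ell,j}| \le K\lambda^\ell$ with $\lambda < 1$, so the telescoping series is summable and $g^{(r)}_\ell \to G_r$ uniformly. For the inductive step, the identity $g^{[s]}_{\ell,j} = \tfrac{s+1}{m^\ell} f^{[s+1]}_{\ell,j}$ (again from \eqref{eq:recurse}), together with the uniform bound on $f^{[s+1]}_{\ell,\cdot}$ coming from the convergence of $g^{(s+1)}_\ell$, gives $\max_j|g^{[s]}_{\ell,j}| = O(m^{-\ell})$, and the same estimate yields uniform convergence of $g^{(s)}_\ell$. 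The case $s = 0$ in particular re-proves convergence of the scheme and identifies $G_0 = f$.

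For the refined statement when $1/m < \lambda < 1$, set $\alpha := -\log_m\lambda \in (0,1)$, so $\lambda = m^{-\alpha}$. Summing the telescoping bound gives the uniform approximation $\|g^{(r)}_\ell - f^{(r)}\|_\infty \le C\lambda^\ell = C\,m^{-\alpha\ell}$, while the slope computation shows $g^{(r)}_\ell$ is Lipschitz with constant $r!\,m^\ell \max_j|g^{[r]}_{\ell,j}| \le C'(m\lambda)^\ell = C'\,m^{(1-\alpha)\ell}$. For $x,y$ with $h := |x-y|$, I choose the level $\ell$ with $m^{-\ell}\approx h$ and insert $g^{(r)}_\ell$ into $f^{(r)}(x) - f^{(r)}(y)$ via the triangle inequality: the two approximation terms contribute $O(m^{-\alpha\ell}) = O(h^\alpha)$ and the single Lipschitz term contributes $O(m^{(1-\alpha)\ell} h) = O(h^\alpha)$, so $f^{(r)} \in C^\alpha$ with $\alpha = -\log_m\lambda$, as claimed.

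The step requiring the most care — and the main obstacle — is passing the derivative to the limit in the ladder, i.e.\ deducing $G_s' = G_{s+1}$ from the uniform convergence of $g^{(s)}_\ell$ and of its slopes. This rests on the identity $g^{(s)}_\ell(x) = g^{(s)}_\ell(x_0) + \int_{x_0}^x (g^{(s)}_\ell)'$ and on controlling the gap between the piecewise constant slope $(g^{(s)}_\ell)'$ and the piecewise linear $g^{(s+1)}_\ell$, which is at most the maximal jump $\max_j|g^{[s+1]}_{\ell,j}| \to 0$. One must also check that the $O(m^{-\ell})$ ambiguity in attaching each divided difference to a node $t_{\ell,j}$ (including the parameter shift $\tau$ of \eqref{eq:tau}) perturbs $g^{(s)}_\ell$ only by terms of size $O(\lambda^\ell)$, so that it does not affect $G_s$; the remaining estimates are then routine.
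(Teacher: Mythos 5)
Your proof is correct, and its quantitative core coincides with the paper's: a telescoping bound $\|L_{\ell+1}-L_\ell\|_\infty = O(\lambda^\ell)$ on the piecewise linear interpolants of the $r$-th divided differences, a Lipschitz bound $O\big((m\lambda)^\ell\big)$ on those interpolants coming from the slopes $r!\,m^\ell g^{[r]}_{\ell,j}$, and the three-term triangle inequality at the level $\ell$ for which $m^{-\ell}\le|x-y|\le m^{-\ell+1}$. You differ in two places. First, where the paper derives the telescoping bound by a symbol factorization, writing $f_{\ell+1}(z)-h_{\ell+1}(z)=(1+z+\cdots+z^{m-1})\,d(z)\,f_\ell(z^m)$ and using $d(1)=0$ to extract a factor $(1-z)$ so that the update is driven by $g_\ell(z^m)$, you use the equivalent observation that the weights of the derived mask $a^{[s]}$ form a partition of unity over each coset, so that new values are local averages of old ones and the update is controlled by first differences; these are the same estimate in different notation. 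Second, and more substantially, you prove the ladder identity $G_s'=G_{s+1}$ by integrating the piecewise constant slopes and passing to the limit, thereby justifying that the uniform limit of the interpolants of the data $f^{[r]}_{\ell,j}$ really is $f^{(r)}$. The paper suppresses this step (it simply drops the superscripts and points to the binary and ternary versions of the statement in the literature), so your version is, if anything, more self-contained. Your closing caveats about the $O(m^{-\ell})$ node-attachment ambiguity and the shift $\tau$ are harmless for a H\"older exponent and are sidestepped in the paper by working with the standard parametrization, under which the grids are nested.
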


\begin{proof}
To simplify notation, let us drop the superscripts in $g^{[r]}_{\ell,j}, g^{[r]}_\ell(z), f^{[r]}_{\ell,j}$, $f^{[r]}_\ell (z)$, $f^{(r)}, a^{[r]}(z)$. Using the standard parametrization, let $L_\ell$ denote the piecewise linear function through the points $(m^{-\ell}j, f_{\ell,j})$ at level $\ell$. We first bound the maximal difference between these piecewise linear functions at levels $\ell$ and $\ell+1$ in terms of the differences at level $\ell$. Since this maximum is attained at one of the breakpoints,
\begin{equation}\label{eq:diffpolygons}
\|L_{\ell+1} - L_\ell\|_\infty = \max_j \left|f_{\ell+1,j} - h_{\ell+1,j} \right|,
\end{equation}
where, writing $j = mj' + \varepsilon$,
\[ h_{\ell+1, mj' + \varepsilon} := \frac{m-\varepsilon}{m} f_{\ell,j'} + \frac{\varepsilon}{m} f_{\ell,j'+1},\qquad \varepsilon = 0, 1, \ldots, m - 1,\qquad j'\in \ZZ, \]
with corresponding symbol
\[ h_{\ell+1}(z) := \sum_j h_{\ell+1, j} z^j = \frac{(1 + z + \cdots + z^{m-1})^2}{mz^{m-1}} f_\ell(z^m).\]
Therefore
\begin{align*}
f_{\ell+1}(z) - h_{\ell+1}(z)
& = (1 + z + \cdots + z^{m-1}) d(z) f_\ell(z^m),
\end{align*}
with
\[ d(z) := \frac{a(z)}{1 + z + \cdots + z^{m-1}} - \frac{1 + z + \cdots + z^{m-1}}{mz^{m-1}}. \]
But $d(1) = a(1)/m - 1 = 0$ by \eqref{eq:convergence2}, so that $d(z) = (1 - z)e(z)$, with $e(z) = \sum_j e_j z^j$
a Laurent polynomial. Therefore
\[ f_{\ell+1}(z) - h_{\ell+1}(z) = e(z)(1 - z^m) f_\ell(z^m) = e(z) g_\ell(z^m), \]
or equivalently
\[ f_{\ell+1,j} - h_{\ell+1,j} = \sum_k e_{j - mk} g_{\ell,k}. \]
Using \eqref{eq:diffpolygons}, we obtain, for some constant $K_1$,
\[ \|L_{\ell+1} - L_\ell\|_\infty
  =  \max_j | f_{\ell+1,j} - h_{\ell+1,j} |
\leq \max_j \sum_k |e_{j - mk}| \cdot \max_k |g_{\ell,k}|
\leq K_1 \lambda^\ell, \]
from which it follows that $[L_\ell]_\ell$ is a Cauchy sequence. Equipped with the infinity norm $\|\cdot\|_\infty$, the space of bounded continuous functions on the real line is complete, and $[L_\ell]_\ell$ converges uniformly to a continuous function $f$. Moreover,
\begin{equation}\label{eq:difflimitpol}
\|f - L_\ell\|_\infty
\leq \sum_{\ell' = \ell}^\infty \|L_{\ell'+1} - L_{\ell'}\|_\infty
\leq \sum_{\ell' = \ell}^\infty K_1\lambda^{\ell'}
= K_2 \lambda^\ell,\quad K_2 := \frac{K_1}{1-\lambda},
\end{equation}
so that $[L_\ell]_\ell$ converges to $f$ with rate $\lambda$. In addition, note that
\begin{equation}\label{eq:diffpolj}
|L_\ell(x) - L_\ell(y)|
\leq |x - y|\cdot \frac{\max_j g_{\ell,j}}{m^{-\ell}} 
\leq \frac{|x - y|}{m^{-\ell}} K \lambda^\ell,
\end{equation}
implying
\begin{equation}\label{eq:difflimfunc}
|f(x) - f(y)| \leq |f(x) - L_\ell(x)| + |L_\ell(x) - L_\ell(y)| + |L_\ell(y) - f(y)|.
\end{equation}
It suffices to verify the H\"older condition locally. Let $x, y$ be such that $m^{-\ell} \leq |x - y| \leq m^{-\ell + 1}$, so that
\[ \lambda^\ell = m^{\ell \log_m(\lambda)} \leq |x - y|^{-\log_m(\lambda)},\qquad \frac{|x - y|}{m^{-\ell}} \leq m.\]
Combining \eqref{eq:difflimitpol}--\eqref{eq:difflimfunc} gives
\[ |f(x) - f(y)| \leq (2K_2 + m K)|x - y|^{-\log_m(\lambda)}, \]
implying that $f\in C^{-\log_m(\lambda)}$ whenever $1/m < \lambda < 1$.
\end{proof}

\subsection{Growth rate of the differences of the data}\label{sec:reduction}

How can we use \eqref{eq:growth} in the case that it holds with
$\lambda \ge 1$? Then we do not know whether $f \in C^r$, but if
$r \ge 1$ we
can use the `reduction procedure' of Daubechies,
Guskov, and Sweldens \cite{Daubechies.Guskov.Sweldens99} to
obtain information about lower order derivatives.
Although the procedure was shown to work for
binary interpolatory schemes in \cite{Daubechies.Guskov.Sweldens99},
it also applies to the more general scheme \eqref{eq:scheme}.

\begin{lemma}\label{lem:reductionprocedure}
Suppose \eqref{eq:polynomialgeneration2} holds for some $r \ge 1$.
If \eqref{eq:growth} holds for some $\lambda$, then there are constants $D_1, D_2, D_3$ such that
\[
|g_{\ell,j}^{[r-1]}| \leq
\left\{
\begin{array}{rl}
\displaystyle D_1    \left(\frac{\lambda}{m}\right)^\ell & \text{ if }\lambda > 1,\\
\displaystyle (D_2 + D_3\ell) \left(\frac{1}{m}\right)^\ell & \text{ if }\lambda = 1.
\end{array}
\right.
\]
\end{lemma}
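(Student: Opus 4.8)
The plan is to reduce the desired bound on the first differences $g_{\ell,j}^{[r-1]}$ of the $(r-1)$-st divided differences to a bound on the $r$-th divided differences $f_{\ell,j}^{[r]}$ themselves, and then to control the latter by exploiting that they evolve under a scheme that reproduces constants. The starting point is the purely algebraic observation, read off from the recursion \eqref{eq:recurse} with $s = r$, that
\[ f_{\ell,j}^{[r]} = \frac{m^\ell}{r}\left(f_{\ell,j}^{[r-1]} - f_{\ell,j-1}^{[r-1]}\right) = \frac{m^\ell}{r}\, g_{\ell,j}^{[r-1]}, \]
so that $g_{\ell,j}^{[r-1]} = \tfrac{r}{m^\ell} f_{\ell,j}^{[r]}$. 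Writing $u_{\ell,j} := f_{\ell,j}^{[r]}$, it therefore suffices to show that $M_\ell := \sup_j |u_{\ell,j}|$ grows at most like $\lambda^\ell$ when $\lambda > 1$ and at most linearly in $\ell$ when $\lambda = 1$; multiplying by $r/m^\ell$ then produces the two stated bounds, with $D_1, D_2, D_3$ absorbing the various constants.

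Next I would record that, by the factorization \eqref{eq:polynomialgeneration2} (with $n$ replaced by $r$) together with \eqref{eq:derivedschemea}, the sequence $u$ refines with symbol $a^{[r]}(z) = a(z)/\sigma_m^r(z) = m\sigma_m(z) b(z)$. Since $\sigma_m(1) = 1$, $b(1) = 1$ and $\sigma_m(\zeta_m^k) = 0$ for $k = 1, \ldots, m-1$, this symbol satisfies the convergence condition \eqref{eq:convergence2}, equivalently the coset sums $\sum_k a^{[r]}_{mk+\varepsilon} = 1$ for each $\varepsilon = 0, \ldots, m-1$. This coset-sum-one property, which is exactly what makes the scheme reproduce constants, is the crux of the estimate to follow.

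The main step is a one-level estimate comparing fine and coarse values of $u$. Writing a fine index as $j = mj' + \varepsilon$ and using that the weights $a^{[r]}_{m(j'-k)+\varepsilon}$ sum to one over $k$, I would subtract off the reference value $u_{\ell,j'}$ to obtain
\[ u_{\ell+1,\,mj'+\varepsilon} - u_{\ell,j'} = \sum_k a^{[r]}_{m(j'-k)+\varepsilon}\left(u_{\ell,k} - u_{\ell,j'}\right). \]
Because $a^{[r]}$ has finite support, only boundedly many $k$ contribute, with $|j'-k|$ at most a fixed constant $C_0$, and each $u_{\ell,k} - u_{\ell,j'}$ telescopes into at most $C_0$ consecutive first differences $g_{\ell,i}^{[r]}$; the hypothesis \eqref{eq:growth} then gives $|u_{\ell,k} - u_{\ell,j'}| \le C_0 K\lambda^\ell$ for the relevant $k$. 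Hence $|u_{\ell+1,\,mj'+\varepsilon} - u_{\ell,j'}| \le C'K\lambda^\ell$ for a constant $C'$ independent of $\ell, j', \varepsilon$. Since every index at level $\ell+1$ has this form, $M_{\ell+1} \le M_\ell + C'K\lambda^\ell$, and iterating from the level $\ell_0$ beyond which \eqref{eq:growth} holds bounds $M_\ell$ by a geometric sum: when $\lambda > 1$ the sum is dominated by its top term and $M_\ell = O(\lambda^\ell)$, whereas when $\lambda = 1$ the sum is a count and $M_\ell = O(\ell)$. Combined with the identity of the first paragraph, this yields exactly the two cases of the lemma.

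I expect the genuine obstacle to lie not in any single computation but in setting up the telescoping estimate \emph{uniformly}: one must verify that the number of first-difference terms summed in $u_{\ell,k} - u_{\ell,j'}$, and hence the constant $C'$, depend only on the fixed support of $a^{[r]}$ and not on $\ell$ or on the position $j$. Once this uniformity is secured, the remainder is the elementary analysis of the recursion $M_{\ell+1} \le M_\ell + C'K\lambda^\ell$, mirroring the binary interpolatory case treated in \cite{Daubechies.Guskov.Sweldens99}.
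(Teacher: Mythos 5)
Your proposal is correct and follows essentially the same route as the paper: both reduce via $g_{\ell,j}^{[r-1]} = \tfrac{r}{m^{\ell}}f_{\ell,j}^{[r]}$, use that the coset sums of $a^{[r]}$ equal one to get the one-level estimate $|f^{[r]}_{\ell+1,mj'+\varepsilon}-f^{[r]}_{\ell,j'}|\le C'K\lambda^{\ell}$, and then sum the resulting geometric series over levels. The only cosmetic difference is that the paper telescopes along an $m$-adic digit path from level $\ell$ down to level $0$, whereas you phrase the same estimate as the recursion $M_{\ell+1}\le M_\ell + C'K\lambda^{\ell}$ for the supremum.
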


\begin{proof}
By the definition of $a^{[r]}(z)$, one has $a^{[r]}(1) = a(1) = m$. Moreover, 
by the divisibility assumption \eqref{eq:polynomialgeneration2}, $a^{[r]}$ is divisible by
$(1 - z^m)/(1 - z)$ so that $a^{[r]}(\zeta_m^k) = 0$ for $k = 1,\ldots, m-1$.
It follows that
\[ \sum_k a_{mk}^{[r]} = \sum_k a_{mk+1}^{[r]} = \cdots = \sum_k a_{mk+m-1}^{[r]} = 1, \]
which, together with \eqref{eq:derivedschemea}, implies that there is a constant $K_1$ such that
\[ |f_{\ell+1,mj+s}^{[r]} - f_{\ell,j}^{[r]}| \le
   K_1 \max_j |g_{\ell,j}^{[r]}|, \qquad s=0,1,\ldots,m-1. \]
So, for any level $\ell \ge 1$, if we represent any $j \in \ZZ$ in $m$-ary form
as $j = j_\ell$, where
\[ j_{\ell'} = m j_{\ell'-1} + s_\ell, \qquad {\ell'}=\ell,\ell-1,\ldots,1, \]
for some $j_0 \in \ZZ$ and $s_1,\ldots,s_\ell \in \{0,1,\ldots,m-1\}$, then
\[ |f_{\ell,j}^{[r]} - f_{0,j_0}^{[r]}|
  \le \sum_{\ell'=1}^\ell | f_{\ell',j_{\ell'}}^{[r]} - f_{\ell'-1,j_{\ell'-1}}^{[r]} |
 \le K_1 K (1 + \lambda + \cdots + \lambda^{\ell-1}). \]
Hence,
\[ |f_{\ell,j}^{[r]}| \le K_2 + K_1 K (1 + \lambda + \cdots + \lambda^{\ell-1}) \]
for some constant $K_2$, and since
\[ g_{\ell,j}^{[r-1]} = m^{-\ell} r f_{\ell,j}^{[r]}, \]
this gives the result in the two cases $\lambda > 1$ and $\lambda = 1$.
\end{proof}

By applying this procedure recursively, it follows that
if \eqref{eq:growth} holds for any $\lambda$ with
$1/m < \lambda < m^r$, then
$f \in C^{r-\log_m(\lambda)}$ if $\log_m(\lambda)$ is not an integer,
and
$f \in C^{r-\log_m(\lambda) - \epsilon}$ for any small $\epsilon > 0$
if $\log_m(\lambda)$ is an integer.

\subsection{Growth rate of the iterated scheme for the differences}
\label{sec:rm}

If $p=0$ in \eqref{eq:bform}, then $b_0 = 1$ by \eqref{eq:convergence2} and \eqref{eq:polynomialgeneration2}. In this case the scheme \eqref{eq:scheme} is the $m$-ary B-spline scheme of degree $r$. Since \eqref{eq:growth} holds with $\lambda = 1$, we conclude using Lemma \ref{lem:reductionprocedure} that the limit function belongs to $C^\beta$ for any $\beta < r$, which is well known.

Therefore we assume from now on that $p \ge 1$. With $r$ in \eqref{eq:polynomialgeneration2} fixed, write $g_{\ell,j} = g_{\ell,j}^{[r]}$ and
$g_\ell(z) = \sum_j g_{\ell,j} z^j$. Then
\begin{equation}\label{eq:derivedschemeL}
 g_{\ell+1}(z) = b(z) g_\ell(z^m),
\end{equation}
with $b(z)$ as in \eqref{eq:polynomialgeneration2},
or equivalently,
\begin{equation}\label{eq:derivedscheme}
 g_{\ell+1,j} = \sum_k b_{j-mk} g_{\ell,k}.
\end{equation}
In the following lemma we rephrase the bound \eqref{eq:growth} for the data $g_{\ell,j}$ as a bound for their scheme.

\begin{lemma}\label{lem:growth2}
The bound \eqref{eq:growth} holds, for some constant $K$, if there is some constant $K'$ such that
\begin{equation}\label{eq:growth2}
 \max_j |b_{\ell,j}| \le K' \lambda^\ell.
\end{equation}
\end{lemma}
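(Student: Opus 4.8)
The plan is to exploit the linearity of the derived scheme \eqref{eq:derivedschemeL} by expressing the level-$\ell$ data $g_{\ell,j}$ directly as a convolution of the $\ell$-fold iterated mask $b_{\ell,\cdot}$ against the initial differences $g_{0,\cdot}$, and then to bound this convolution by the supremum of the iterated mask. By the linearity of the refinement rule it suffices to establish \eqref{eq:growth} for cardinal initial data $\bff_0 = \bfdelta$; in this case the initial divided-difference data $g_0(z) = g_0^{[r]}(z)$ is a Laurent polynomial, hence finitely supported, and we may set $C := \sum_k |g_{0,k}| < \infty$.

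First I would iterate the symbol relation \eqref{eq:derivedschemeL}. Writing $b_\ell(z) := \sum_j b_{\ell,j} z^j$ for the symbol of the $\ell$-fold iterate of $S_\bfb$, defined by $b_0(z) = 1$ and $b_{\ell+1}(z) = b(z) b_\ell(z^m)$ (so that $b_{\ell,j}$ is precisely the cardinal data of $S_\bfb$ at level $\ell$), a straightforward induction yields
\[ g_\ell(z) = b_\ell(z)\, g_0\big(z^{m^\ell}\big). \]
The base case $\ell = 0$ is immediate, and the inductive step follows by substituting the hypothesis into \eqref{eq:derivedschemeL}. Extracting the coefficient of $z^j$ on both sides then gives
\[ g_{\ell,j} = \sum_k b_{\ell,\, j - m^\ell k}\, g_{0,k}, \]
which is the closed form that \eqref{eq:derivedscheme} telescopes into across $\ell$ levels.

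From here the estimate is immediate: bounding each factor $|b_{\ell,\, j - m^\ell k}|$ by $\max_i |b_{\ell,i}|$ gives
\[ |g_{\ell,j}| \le \Big(\max_i |b_{\ell,i}|\Big) \sum_k |g_{0,k}| = C \max_i |b_{\ell,i}|, \]
so that assumption \eqref{eq:growth2} produces $|g_{\ell,j}| \le C K' \lambda^\ell$, which is \eqref{eq:growth} with $K := C K'$.

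I do not expect a serious obstacle here; the real content of the lemma is the clean separation of the level-dependent decay, carried entirely by the iterated mask $b_{\ell,\cdot}$, from the fixed finitely supported initial data $g_0$. The only points requiring care are the identification of $b_{\ell,j}$ as the cardinal data of the $\ell$-fold iterate of $S_\bfb$, so that \eqref{eq:growth2} is genuinely a statement about this iterate, and the reduction to finitely supported initial data, which guarantees $C < \infty$ and is justified by linearity of the scheme \eqref{eq:scheme}.
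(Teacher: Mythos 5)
Your proof is correct and follows essentially the same route as the paper: iterate $g_{\ell+1}(z)=b(z)g_\ell(z^m)$ to obtain $g_\ell(z)=b_\ell(z)g_0(z^{m^\ell})$, extract coefficients to get $g_{\ell,j}=\sum_k b_{\ell,j-m^\ell k}g_{0,k}$, and bound by $\max_i|b_{\ell,i}|\sum_k|g_{0,k}|$. The only cosmetic difference is your explicit reduction to finitely supported initial data, which the paper leaves implicit.
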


\begin{proof}
Iterating \eqref{eq:derivedschemeL} gives
\begin{equation}\label{eq:Gj}
 g_\ell(z) = b_\ell(z) g_0(z^{m^\ell}),
\end{equation}
where
\begin{equation}\label{eq:bj}
 b_\ell(z) := b(z) b(z^m) \cdots b(z^{m^{\ell-1}}).
\end{equation}
But then
\begin{equation}\label{eq:bj1}
 b_{\ell+1}(z) = b(z) b_\ell(z^m),
\end{equation}
and so $b_\ell(z)$ is the Laurent polynomial of the data $b_{\ell,j}$, where $b_{0,j} = \delta_{j,0}$ and
\begin{equation}\label{eq:schemeb}
 b_{\ell+1,j} = \sum_k b_{j-mk} b_{\ell,k}.
\end{equation}
In particular, $b_{1,j} = b_j$. Since \eqref{eq:Gj} can be written as
\begin{equation}\label{eq:gb}
 g_{\ell,j} = \sum_k b_{\ell,j-m^\ell k} g_{0,k},
\end{equation}
it follows that
\[ |g_{\ell,j}| \le \max_j |b_{\ell,j}| \sum_k |g_{0,k}|, \]
and so \eqref{eq:growth} holds if \eqref{eq:growth2} holds for some constant $K'$.
\end{proof}

The following lemma provides the reason why the bound \eqref{eq:growth2} is easier to verify than \eqref{eq:growth}, in the case of a nonnegative Fourier transform. For a direct proof see the report \cite{Floater.Muntingh13} or \cite{Rioul92}. It is also a direct consequence of Herglotz' theorem, which states that the condition of the lemma is equivalent to $\bfb$ being a positive definite sequence; see \cite{Charina14}.

\begin{lemma}\label{lem:rioul}
If $\bfb$ as in \eqref{eq:bform} has Fourier transform $B(\xi) \ge 0$ for all $\xi$, then
\[ \max_j |b_{\ell,j}| = b_{\ell,0} \qquad \text{for all } \ell \ge 0. \]
\end{lemma}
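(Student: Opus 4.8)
The plan is to move to the Fourier side, where the statement reduces to the classical fact that the Fourier coefficients of a nonnegative trigonometric polynomial are dominated in modulus by the zeroth one. The only thing special to this setting is checking that nonnegativity of the Fourier transform is inherited by the iterated mask $b_\ell$, and this is immediate from the product structure in \eqref{eq:bj}.

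First I would write down the Fourier transform of $b_\ell$. Setting $z = \rme^{-i\xi}$ in \eqref{eq:bj} and abbreviating $B_\ell(\xi) := b_\ell(\rme^{-i\xi})$, each factor becomes a dilate of $B$, so that
\[ B_\ell(\xi) = \prod_{k=0}^{\ell-1} B\big(m^k \xi\big). \]
Since $B(\xi)\ge 0$ for every $\xi$ by hypothesis, every factor on the right is nonnegative, and therefore $B_\ell(\xi)\ge 0$ for all $\xi$ and all $\ell\ge 0$, the case $\ell = 0$ being the empty product $B_0 \equiv 1$. This is exactly the point where nonnegativity passes through the dilation-and-multiply recursion \eqref{eq:bj1}.

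Next I would recover the $b_{\ell,j}$ as the Fourier coefficients of $B_\ell$. Because $b(z) = b(z^{-1})$ by \eqref{eq:bform}, the same holds for each $b(z^{m^k})$ and hence for their product $b_\ell(z)$, so the sequence $(b_{\ell,j})_j$ is real and even, and
\[ b_{\ell,j} = \frac{1}{2\pi}\int_{-\pi}^{\pi} B_\ell(\xi)\,\rme^{ij\xi}\,\rmd\xi. \]
In particular $b_{\ell,0} = \frac{1}{2\pi}\int_{-\pi}^{\pi} B_\ell(\xi)\,\rmd\xi \ge 0$ since $B_\ell\ge 0$. Finally, bounding the integral trivially and again using $B_\ell\ge 0$,
\[ |b_{\ell,j}| \le \frac{1}{2\pi}\int_{-\pi}^{\pi} |B_\ell(\xi)|\,\rmd\xi = \frac{1}{2\pi}\int_{-\pi}^{\pi} B_\ell(\xi)\,\rmd\xi = b_{\ell,0} \]
for every $j$, whence $\max_j |b_{\ell,j}| = b_{\ell,0}$.

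I do not expect a genuine obstacle: all the content sits in the product formula, which shows that nonnegativity survives the iteration, together with the standard coefficient estimate. The only points requiring a little care are the bookkeeping of the dilates $m^k\xi$ and noting that the symmetry of $b$ forces each $b_{\ell,j}$ to be real, so that $b_{\ell,0}$ is honestly the maximum of the $|b_{\ell,j}|$ rather than merely an upper bound for their real parts. This reproduces the direct argument of \cite{Rioul92, Floater.Muntingh13} and matches the positive-definiteness viewpoint of Herglotz' theorem mentioned above.
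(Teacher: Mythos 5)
Your proof is correct. The paper itself does not prove Lemma \ref{lem:rioul}; it only points to a ``direct proof'' in \cite{Rioul92} and \cite{Floater.Muntingh13} and to the Herglotz/positive-definiteness viewpoint of \cite{Charina14}. Your argument is exactly that standard direct proof: nonnegativity of $B$ passes to $B_\ell(\xi)=\prod_{k=0}^{\ell-1}B(m^k\xi)$ via the product formula \eqref{eq:bj}, and then the trivial bound $|b_{\ell,j}|\le \frac{1}{2\pi}\int_{-\pi}^{\pi}B_\ell = b_{\ell,0}$ on the Fourier coefficients of a nonnegative trigonometric polynomial finishes it, with the case $j=0$ giving $b_{\ell,0}\ge 0$ so that the supremum is attained there. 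One tiny remark: the realness of the $b_{\ell,j}$ already follows from the mask $\bfb$ being a real sequence (as assumed throughout the paper), so the appeal to the symmetry $b(z)=b(z^{-1})$ is not needed for that particular point, though it does correctly give the evenness $b_{\ell,-j}=b_{\ell,j}$ used later in the folding argument.
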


\subsection{Growth rate as a spectral radius}\label{sec:spectral}

For an odd symmetric mask $\bfb$ with nonnegative Fourier transform, it follows that \eqref{eq:growth} holds if $b_{\ell,0} \le K \lambda^\ell$ for large enough $\ell$. One way to determine such $\lambda$ is using a subvector of $[b_{\ell,j}]_j$ that includes the central coefficients $b_{\ell,0}$ and is `self-generating' in the following sense.

\begin{lemma}\label{lem:selfgenerating}
For $\ell\geq 0$, the finite submatrix and subvectors
\begin{equation}\label{eq:Mandb}
\bfM := [b_{j-mk}]_{j,k=-\lfloor\frac{p-1}{m-1}\rfloor,\ldots,\lfloor\frac{p-1}{m-1}\rfloor},\quad 
\bfb_\ell := \left[b_{\ell,-\lfloor\frac{p-1}{m-1}\rfloor},\ldots,b_{\ell,\lfloor\frac{p-1}{m-1}\rfloor}\right]^T,
\end{equation}
satisfy $\bfb_{\ell+1} = \bfM \bfb_\ell$.
\end{lemma}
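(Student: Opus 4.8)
The plan is to show that the recursion $b_{\ell+1,j} = \sum_k b_{j-mk}\, b_{\ell,k}$ from \eqref{eq:schemeb}, when restricted to the index range $|j| \le \lfloor\frac{p-1}{m-1}\rfloor$, only involves values $b_{\ell,k}$ with $k$ in that same range. Granting this, the restriction of \eqref{eq:schemeb} to these indices is exactly the matrix--vector product $\bfb_{\ell+1} = \bfM\bfb_\ell$ with $\bfM = [b_{j-mk}]$, since $m_{j,k} = b_{j-mk}$ is precisely the coefficient of $b_{\ell,k}$ appearing in the formula for $b_{\ell+1,j}$. So the content of the lemma is the claim that the truncated system is \emph{closed}: coefficients outside the window never feed into coefficients inside it.

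First I would fix $P := \lfloor\frac{p-1}{m-1}\rfloor$ and examine when a term $b_{j-mk}\, b_{\ell,k}$ in \eqref{eq:schemeb} is nonzero for $|j|\le P$. Because $\bfb$ is supported on $[-p,p]$ by \eqref{eq:bform}, the factor $b_{j-mk}$ vanishes unless $|j-mk|\le p$, i.e. unless $mk - p \le j \le mk + p$. The task is to verify that every such contributing index $k$ already satisfies $|k|\le P$, so that the sum over $k$ in \eqref{eq:schemeb} can be truncated to $|k|\le P$ without changing $b_{\ell+1,j}$ for $|j|\le P$. Equivalently, I would show that if $|k| > P$ and $|j| \le P$, then $|j - mk| > p$, forcing $b_{j-mk}=0$.

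The key estimate is the following. Suppose $|k|\ge P+1$; by symmetry take $k \ge P+1$. Then, using $|j|\le P$,
\[
 j - mk \le P - m(P+1) = -(m-1)P - m.
\]
Since $(m-1)P = (m-1)\lfloor\frac{p-1}{m-1}\rfloor \ge (m-1)\cdot\frac{p-1 - (m-2)}{m-1} = p - m + 1$ for the integer floor, one checks that $(m-1)P + m \ge p + 1$, whence $j - mk \le -(p+1) < -p$ and $b_{j-mk}=0$. The case $k\le -(P+1)$ is symmetric. Thus only $|k|\le P$ contributes, the recursion closes, and the truncated relation reads $\bfb_{\ell+1}=\bfM\bfb_\ell$.

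The main obstacle is getting the floor arithmetic exactly right: the inequality $(m-1)\lfloor\frac{p-1}{m-1}\rfloor + m \ge p+1$ must be checked carefully, since a sloppy bound on the floor would either leave the window too small (breaking closure) or too large (making $\bfM$ non-minimal). Writing $p - 1 = (m-1)P + s$ with $0 \le s \le m-2$ makes this transparent: then $(m-1)P + m = (p-1-s) + m = p + (m-1-s) \ge p+1$ since $s\le m-2$, which is precisely the needed bound. I would present this residue decomposition explicitly, as it simultaneously verifies closure and confirms that $P$ is the smallest truncation radius for which the argument succeeds, matching the sizes tabulated in Table~\ref{tab:M}.
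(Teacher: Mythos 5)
Your proposal is correct and follows essentially the same route as the paper: both arguments reduce the lemma to the closure of the recursion \eqref{eq:schemeb} on the index window $|j|\le\lfloor\frac{p-1}{m-1}\rfloor$, and both hinge on the same floor-arithmetic inequality $(m-1)\lfloor\frac{p-1}{m-1}\rfloor+m\ge p+1$ (the paper states the step contrapositively, showing that a nonzero $b_{j-mk}$ with $|k|$ outside the window forces $|j|$ outside it, while you show directly that $b_{j-mk}=0$; these are equivalent). Your explicit residue decomposition $p-1=(m-1)P+s$ is a slightly more transparent justification of that inequality than the paper's bare chain of inequalities, but the substance is identical.
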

\begin{proof}
If $k\geq \lfloor\frac{p-1}{m-1}\rfloor + 1$ in \eqref{eq:schemeb} and the corresponding
coefficient $b_{j-mk}\neq 0$, then $j - mk \geq -p$ implying that
\[ j\geq -p + mk\geq - p + m\left\lfloor\frac{p-1}{m-1}\right\rfloor + m \geq \left\lfloor\frac{p-1}{m-1}\right\rfloor + 1.\]
So any such $b_{\ell,k}$ will not contribute 
to the linear combination for $b_{\ell+1,j}$ with $j\le \lfloor\frac{p-1}{m-1} \rfloor$.
Similarly, if $k\leq -\lfloor\frac{p-1}{m-1}\rfloor - 1$
in \eqref{eq:schemeb} and the corresponding
coefficient $b_{j-mk}\neq 0$, then $j - mk \leq p$ implying that
\[ j \leq p + mk\leq p - m\left\lfloor \frac{p-1}{m-1} \right\rfloor - m
\le - \left\lfloor \frac{p-1}{m-1} \right\rfloor - 1.\]
So any such $b_{\ell,k}$ will not contribute 
to the linear combination for $b_{\ell+1,j}$ with $j\ge - \left\lfloor \frac{p-1}{m-1} \right\rfloor$. By \eqref{eq:schemeb}, it follows that $\bfb_{\ell+1} = \bfM \bfb_\ell$ for $\ell\geq 0$.
\end{proof}

\begin{theorem}\label{thm:MainTheoremLower}
Let $\rho$ be the spectral radius of $\bfM$. If $B(\xi) \ge 0$ for all $\xi$, then
\begin{equation}\label{eq:bj0lim}
 \lim_{\ell \to \infty} b_{\ell,0}^{1/\ell} = \rho,
\end{equation}
If $\rho > 1/m$, a lower bound for the regularity of the scheme \eqref{eq:scheme} is $r - \log_m(\rho)$.
\end{theorem}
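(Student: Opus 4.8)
The plan is to split the statement into two independent pieces: first the spectral identity \eqref{eq:bj0lim}, which is the real content, and then the regularity bound, which follows formally from it together with the machinery already assembled. For the identity I would exploit that, under $B(\xi)\ge 0$, Lemma~\ref{lem:rioul} forces the central coefficient to dominate, $b_{\ell,0}=\max_j|b_{\ell,j}|=\|\bfb_\ell\|_\infty$, and that Lemma~\ref{lem:selfgenerating} makes the central block self-propagating, $\bfb_\ell=\bfM^\ell\bfb_0$ with $\bfb_0=[\ldots,0,1,0,\ldots]^\rmT$ the centred unit vector. Writing $q:=\lfloor(p-1)/(m-1)\rfloor$ for the half-size of $\bfM$, the whole proof then reduces to comparing the single entry $b_{\ell,0}$ with the operator norm $\|\bfM^\ell\|_\infty$.

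The clean way to run this comparison is to identify the entries of $\bfM^\ell$ explicitly. Iterating \eqref{eq:schemeb} (equivalently, reading off $b_\ell(z)=b(z)b(z^m)\cdots b(z^{m^{\ell-1}})$) shows that the bi-infinite subdivision operator $\mathcal{B}=[b_{j-mk}]_{j,k}$ satisfies $[\mathcal{B}^\ell]_{j,k}=b_{\ell,\,j-m^\ell k}$. The support estimates in the proof of Lemma~\ref{lem:selfgenerating} say precisely that central indices are fed only by central indices at every level, i.e.\ $\pi\mathcal{B}=\bfM\pi$ for the truncation $\pi$ to $|j|\le q$; hence this formula survives truncation, $(\bfM^\ell)_{j,k}=b_{\ell,\,j-m^\ell k}$ for all $|j|,|k|\le q$. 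Since every entry of $\bfM^\ell$ is thus a single coefficient of $\bfb_\ell$, and all of these are bounded in absolute value by $b_{\ell,0}$ by Lemma~\ref{lem:rioul}, I obtain the two-sided bound
\[
 b_{\ell,0}=(\bfM^\ell)_{0,0}\le \|\bfM^\ell\|_\infty=\max_{|j|\le q}\sum_{|k|\le q}\big|b_{\ell,\,j-m^\ell k}\big|\le (2q+1)\,b_{\ell,0}.
\]
Taking $\ell$-th roots and letting $\ell\to\infty$, the factor $(2q+1)^{1/\ell}\to 1$ washes out, and Gelfand's formula $\|\bfM^\ell\|_\infty^{1/\ell}\to\rho$ pins both sides to $\rho$, giving \eqref{eq:bj0lim}.

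This two-sided bound is where I expect the only real subtlety to lie. The naive route to the lower bound $\liminf b_{\ell,0}^{1/\ell}\ge\rho$ --- arguing that the power iteration $\bfM^\ell\bfb_0$ excites the dominant eigenvalue --- would require knowing that $\bfb_0$ has nonzero component along the top eigenspace of $\bfM$, which is not evident from $\bfM$ alone and can genuinely fail for an arbitrary starting vector. The positivity hypothesis circumvents this entirely: because Lemma~\ref{lem:rioul} bounds \emph{every} entry of $\bfM^\ell$ by $b_{\ell,0}$, the operator norm cannot outgrow the single diagonal entry by more than a fixed polynomial factor, so no eigenvalue of $\bfM$ can be invisible to the orbit of $\bfb_0$.

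Finally, to deduce the regularity bound, assume $\rho>1/m$ and fix any $\beta<r-\log_m(\rho)$. Then $\rho<m^{r-\beta}$, so I may choose $\lambda$ with $1/m<\rho<\lambda<m^{r-\beta}$. By \eqref{eq:bj0lim}, $b_{\ell,0}\le\lambda^\ell$ for large $\ell$, hence $\max_j|b_{\ell,j}|\le\lambda^\ell$, and Lemma~\ref{lem:growth2} upgrades this to the difference bound \eqref{eq:growth}. The reduction procedure (Lemma~\ref{lem:reductionprocedure}, applied recursively as in Section~\ref{sec:reduction}) then yields $f\in C^{r-\log_m(\lambda)}$, up to an arbitrarily small loss when $\log_m(\lambda)\in\ZZ$; since $r-\log_m(\lambda)>\beta$, in particular $f\in C^\beta$ for every admissible initial datum. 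As $\beta<r-\log_m(\rho)$ was arbitrary, the regularity of \eqref{eq:scheme} is at least $r-\log_m(\rho)$.
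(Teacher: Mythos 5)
Your proposal is correct and follows essentially the same route as the paper: Lemma~\ref{lem:rioul} plus the self-generating property of Lemma~\ref{lem:selfgenerating} give the two-sided bound $b_{\ell,0}\le\Vert\bfM^\ell\Vert_\infty\le(2\lfloor\frac{p-1}{m-1}\rfloor+1)\,b_{\ell,0}$, Gelfand's formula yields \eqref{eq:bj0lim}, and Lemma~\ref{lem:growth2} together with the reduction procedure gives the regularity bound. Your explicit identification $(\bfM^\ell)_{j,k}=b_{\ell,\,j-m^\ell k}$ via the intertwining relation is just a more careful spelling-out of the paper's remark that $\bfM^\ell$ takes its entries from $[b_{\ell,j}]_j$.
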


\begin{proof}
Using Lemma \ref{lem:rioul} and $\bfb_\ell = \bfM^\ell \bfb_0$ by Lemma \ref{lem:selfgenerating},
\[ b_{\ell,0} = \Vert \bfb_\ell \Vert_\infty
   \le \Vert \bfM^\ell \Vert_\infty \Vert \bfb_0 \Vert_\infty
   = \Vert \bfM^\ell \Vert_\infty. \]
On the other hand, by \eqref{eq:gb} the matrix $\bfM^\ell$ takes its entries from $[b_{\ell,j}]_j$, so that its maximum absolute row sum $\Vert \bfM^\ell \Vert_\infty$ satisfies
\[ \Vert \bfM^\ell \Vert_\infty \le \left(2\left\lfloor \frac{p-1}{m-1} \right\rfloor + 1\right) \max_j |b_{\ell,j}|
  =  \left(2\left\lfloor \frac{p-1}{m-1} \right\rfloor + 1\right) b_{\ell,0}. \]
Taking $\ell$-th roots and the limit $\ell\to \infty$ one obtains \eqref{eq:bj0lim}. It follows from \eqref{eq:bj0lim} and Lemma \ref{lem:growth2} that \eqref{eq:growth} holds with $K = 1$ for any $\lambda > \rho$, and this proves the lower bound on the regularity of the scheme.
\end{proof}

\subsection{A smaller matrix}\label{sec:folded}

Due to the assumption that $\bfb$ is odd symmetric, the limit \eqref{eq:bj0lim} can also be computed as the spectral radius of a matrix roughly half the size of $\bfM$, using a `folding procedure' \cite{Floater.Muntingh13, Rioul92}. Since $b_{\ell,-j} = b_{\ell,j}$ for all $j$, the vector of coefficients
\[ \bfb_\ell := \left[b_{\ell,0},b_{\ell,1},\ldots,b_{\ell,\lfloor\frac{p-1}{m-1}\rfloor}\right]^T, \]
also includes $b_{\ell,0}$ and is self-generating as well.
Indeed, from \eqref{eq:schemeb},
\[ b_{\ell+1,j} = b_j b_{\ell,0} + \sum_{k \ge 1} 
         (b_{j-mk} + b_{j+mk}) b_{\ell,k}, \]
and, using that $b_{-j} = b_j$, one obtains
\[ b_{\ell+1,j} = b_j b_{\ell,0} + \sum_{k \ge 1} 
         (b_{|j-mk|} + b_{j+mk}) b_{\ell,k}. \]
It follows that $\bfb_{\ell+1} = \bfM \bfb_\ell$, where $\bfM$ is the matrix of dimension $\lfloor \frac{p-1}{m-1}\rfloor + 1$,
\begin{equation}\label{eq:Msmall}
\bfM = [m_{j,k}]_{j,k=0,\ldots,\lfloor \frac{p-1}{m-1}\rfloor},
\qquad
m_{j,k} = \left\{\begin{array}{ll}
b_j & k = 0,\\
b_{|j-mk|} + b_{j+mk} & k \ge 1.
\end{array} \right.
\end{equation}

\subsection{Optimality}\label{sec:optimality}

In this section we show that under a slightly stricter condition, the lower bound on the regularity of Theorem~\ref{thm:MainTheoremLower} is optimal. For related results in the binary and ternary case, see \cite{Rioul92} and \cite[\S 3.4]{Hassan05}.

\begin{theorem}\label{thm:MainTheoremStrict}
If $B(\xi) > 0$ for all~$\xi$, the lower bound $r - \log_m(\rho)$ of Theorem~\ref{thm:MainTheoremLower} is optimal.
\end{theorem}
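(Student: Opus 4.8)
\noindent
The plan is to prove the reverse inequality to Theorem~\ref{thm:MainTheoremLower}: to exhibit a single initial sequence whose limit function lies outside $C^\beta$ for every $\beta>r-\log_m(\rho)$, so that $r-\log_m(\rho)$ is the exact regularity. First I would pass to the scheme governing the $r$-th divided differences. By \eqref{eq:polynomialgeneration2}, $a^{[r]}(z)=a(z)/\sigma_m^r(z)=m\sigma_m(z)b(z)$ satisfies $a^{[r]}(1)=m$ and $a^{[r]}(\zeta_m^k)=0$ for $k=1,\dots,m-1$, so this is a genuinely convergent scheme; its limit $F$ is a nonzero constant multiple of $f^{(r)}$ (up to the parametrisation shift), so it suffices to show $F\notin C^\alpha$ for every $\alpha>-\log_m(\rho)$. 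I treat the principal regime $1/m<\rho<1$, where $b_{\ell,0}\to 0$ forces $F\in C^0$; the cases $\rho\ge 1$ reduce to it by peeling off derivatives with Lemma~\ref{lem:reductionprocedure}. The first differences of this scheme's data are exactly the $g_{\ell,j}$ of \eqref{eq:derivedschemeL}, and choosing the initial data so that $f^{[r]}_{0,\cdot}$ is a unit step (a genuine choice of $f_0$, since divided differences are invertible) yields $g_0=\bfdelta$, hence $g_{\ell,j}=b_{\ell,j}$. By Lemma~\ref{lem:rioul} the maximum $\max_j|g_{\ell,j}|=b_{\ell,0}$ is attained at the centre for \emph{every} level, and by \eqref{eq:bj0lim} we have $b_{\ell,0}^{1/\ell}\to\rho$, so $b_{\ell,0}\ge(\rho-\varepsilon)^\ell$ for large $\ell$.

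Since the centre $j=0$ corresponds to the fixed point $x=0$ of the refinement, the strategy is to locate a genuine singularity of $F$ there. Concretely I would bound the oscillation of $F$ over the interval of length $\asymp m^{-\ell}$ about $0$ from below by $c\,\rho^\ell$, for some $c>0$ and all large $\ell$. Granting this, a function in $C^\alpha$ oscillates by at most $Km^{-\alpha\ell}$ on such an interval, whereas $\rho^\ell/m^{-\alpha\ell}=(\rho\,m^{\alpha})^\ell\to\infty$ exactly when $\alpha>-\log_m(\rho)$; the contradiction gives $F\notin C^\alpha$ and hence optimality. Up to replacing $2$ by $m$ throughout, this is the scheme of the binary computation in the report~\cite{Floater.Muntingh13}, to which the routine (but lengthy) estimates can be deferred.

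The hard part will be the lower bound $c\,\rho^\ell$ on the oscillation, and this is the single place where the strict inequality $B(\xi)>0$ is needed rather than merely $B(\xi)\ge 0$. The obstruction is that the convergence error $\|F-\tilde L_\ell\|_\infty$ is controlled only by $\sum_{\ell'\ge\ell}\max_j|g_{\ell',j}|\asymp\rho^\ell$, i.e. of the \emph{same} order as the central difference $b_{\ell,0}\asymp\rho^\ell$ that one wishes to detect; the crude estimates therefore cannot separate a genuine variation of $F$ from the error, which \emph{a priori} might cancel it. Strict positivity removes this by forbidding the cancellation. Indeed, the nonnegative transfer operator $(Th)(\xi)=\tfrac1m\sum_{k=0}^{m-1}B\big(\tfrac{\xi+2\pi k}{m}\big)h\big(\tfrac{\xi+2\pi k}{m}\big)$ satisfies $b_{\ell,0}=\tfrac1{2\pi}\int_0^{2\pi}(T^\ell 1)(\xi)\,\rmd\xi$, and $B>0$ makes $T$ primitive, so by Perron--Frobenius its leading eigenvalue is $\rho$, simple, with a strictly positive eigenfunction. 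Hence $\rho^{-\ell}b_{\ell,0}$ converges to a positive constant and the renormalised differences $\rho^{-\ell}g_{\ell,\cdot}$ converge to a nonzero self-similar profile that does not cancel across levels; the accumulated variation of $F$ about $0$ is then a definite positive fraction of $\rho^\ell$, which is precisely the required bound.

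Finally, I would note that one may equivalently argue on the Fourier side: for the compactly supported cardinal limit function $\phi=\phi_\bfa$ one has $\widehat\phi(\xi)=\prod_{k\ge1}A(\xi/m^k)/m$, and $B>0$ forbids cancellation among the factors, producing a lower bound on $|\widehat\phi|$ along the dilated frequencies $m^\ell\xi_0$ that is incompatible with the decay required of a function in $C^\beta$ for $\beta>r-\log_m(\rho)$. Either route isolates the same mechanism, namely that strict positivity of the Fourier transform is exactly what prevents destructive interference at the critical scale and pins the exponent to the value furnished by the spectral radius $\rho$.
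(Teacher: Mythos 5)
Your proposal correctly sets up the right initial data ($g^{[r]}_{0,\cdot}=\bfdelta$, so $g^{[r]}_{\ell,j}=b_{\ell,j}$ and $\max_j|g^{[r]}_{\ell,j}|=b_{\ell,0}\sim\rho^\ell$ by Lemma~\ref{lem:rioul} and \eqref{eq:bj0lim}), and you correctly identify the central obstruction: the uniform distance $\|F-L_\ell\|_\infty$ is of the \emph{same} order $\rho^\ell$ as the data differences you want to detect, so slowly decaying differences of the data do not by themselves force a singularity of the limit function. However, the step you offer to overcome this is where the proof breaks down. The Perron--Frobenius argument for the transfer operator, even granting that it yields $\rho^{-\ell}b_{\ell,0}\to c>0$ and a nonzero ``self-similar profile'' for $\rho^{-\ell}g_{\ell,\cdot}$, is a statement about the \emph{coefficient sequences} at each level, not about the limit function. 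The phrase ``does not cancel across levels'' is precisely the assertion that needs proof: a nonzero coefficient sequence paired with the translates of the cardinal function could in principle produce an arbitrarily small (even zero) function if those translates are not linearly independent in a quantitative sense. Nothing in your argument rules this out, so the claimed lower bound $c\,\rho^\ell$ on the oscillation of $F$ near $0$ is not established.

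The missing ingredient is the $\ell^\infty$-stability of the integer translates of $\phi$ in the sense of Jia and Micchelli \eqref{eq:stability}. This is exactly what converts an upper bound on the limit function (from an assumed H\"older exponent, via the identity $\tilde g^{[q]}_{\ell,y}=\sum_k g^{[q]}_{\ell,k}\phi(y-k)$ and the mean value theorem for divided differences) into an upper bound $|g^{[q]}_{\ell,k}|\le K_\infty^{-1}K''m^{-\ell\alpha}$ on the scheme's data, which then contradicts $b_{\ell,0}^{1/\ell}\to\rho$ (Lemmas~\ref{lem:stable} and \ref{lem:stable2}). The hypothesis $B(\xi)>0$ enters not through a primitivity/non-cancellation heuristic but through the factorization $\wphi(\xi)=\wphi_r(\xi)\prod_{\ell\ge1}B(\xi/m^\ell)$: strict positivity of $B$ keeps the infinite product nonvanishing, so $\phi$ inherits the Jia--Micchelli criterion \eqref{eq:sup} from the B-spline factor $\wphi_r$, which is known to be stable. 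Your closing Fourier-side remark gestures at this same factorization, but as stated it runs the implication in the wrong direction (you would need a quantitative pointwise lower bound on $|\wphi|$ along $m^\ell\xi_0$ tied to $\rho$, which again requires more than ``no cancellation among the factors''). To repair the proof, replace the Perron--Frobenius step by a proof that $B>0$ implies $\ell^\infty$-stability, and then argue by contradiction from the assumed regularity as above.
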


To prove this we first establish a lemma that shows that the bound is optimal whenever the cardinal function $\phi$ of the scheme \eqref{eq:scheme} has $\ell^\infty$-stable integer translates. The main point in proving this lemma is that the stability allows us to bound divided differences of the scheme by corresponding divided differences of the limit function.

Following Jia and Micchelli \cite{Jia.Micchelli90}, we say that $\phi$
\emph{has $\ell^\infty$-stable integer translates} if there is some
constant $K_\infty > 0$ such that
for any sequence $\bfc = [c_j]_j$ in $\ell^\infty(\ZZ)$,
\begin{equation}\label{eq:stability}
   \left\| \sum_j c_j \phi(\cdot - j) \right\|_{L^\infty(\RR)} 
    \ge K_\infty \Vert\bfc\Vert_{\ell^\infty(\ZZ)}.
\end{equation}

\begin{lemma}\label{lem:stable}
Suppose $\phi$ has $\ell^\infty$-stable integer translates and $f\in C^{q + \alpha}$ for some integer $q\geq 0$ and $0 < \alpha < 1$. Then for any integer $r \ge q$, there is a constant $K$ such that
\begin{equation}\label{eq:growthopt}
 |g_{\ell,j}^{[r]}| \le K m^{\ell(r-q-\alpha)}.
\end{equation}
\end{lemma}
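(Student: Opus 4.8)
The plan is to convert the divided-difference quantity $g_{\ell,j}^{[r]}$ into an honest finite difference of the limit function $f$, and then play the two hypotheses against each other: the $\ell^\infty$-stability turns a bound on a coefficient sequence into a bound on an $L^\infty$-norm, while the smoothness $f\in C^{q+\alpha}$ controls that norm. The starting point is the representation of $f$ by the cardinal function at level $\ell$. Since the scheme is uniform and nonsingular, linearity gives $f(x)=\sum_i f_{0,i}\,\phi(x-i)$, and feeding the refinement equation for $\phi$ and the refinement rule \eqref{eq:scheme} into an induction on $\ell$ yields
\[ f(x) = \sum_i f_{\ell,i}\,\phi\big(m^\ell x - i\big). \]

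Next I would express the whole sequence $[g_{\ell,i}^{[r]}]_i$ as a single finite difference of $f$. Writing $h:=m^{-\ell}$ and unwinding the recursion \eqref{eq:recurse}, the divided difference $f_{\ell,j}^{[r]}$ equals $\frac{m^{\ell r}}{r!}$ times the $r$-th backward difference of $[f_{\ell,i}]_i$, so that $g_{\ell,j}^{[r]}=f_{\ell,j}^{[r]}-f_{\ell,j-1}^{[r]}=\frac{m^{\ell r}}{r!}(\nabla^{r+1}f_\ell)_j$, where $(\nabla^{r+1}f_\ell)_j=\sum_{t=0}^{r+1}\binom{r+1}{t}(-1)^t f_{\ell,j-t}$. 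Applying the corresponding step-$h$ backward difference $\nabla_h^{r+1}$ to the cardinal representation and shifting indices turns differences of the argument into differences of the coefficients, giving
\[ \big(\nabla_h^{r+1} f\big)(x) = \sum_i (\nabla^{r+1} f_\ell)_i\,\phi\big(m^\ell x - i\big) = \frac{r!}{m^{\ell r}}\sum_i g_{\ell,i}^{[r]}\,\phi\big(m^\ell x - i\big). \]

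Now the stability hypothesis does the essential work. Applying \eqref{eq:stability} to the coefficient sequence $\bfc=[g_{\ell,i}^{[r]}]_i$, and using that the $L^\infty$-norm is invariant under the dilation $x\mapsto m^\ell x$, I obtain
\[ \big\|\nabla_h^{r+1} f\big\|_{L^\infty} = \frac{r!}{m^{\ell r}}\Big\|\sum_i g_{\ell,i}^{[r]}\,\phi(m^\ell\cdot - i)\Big\|_{L^\infty} \ge \frac{r!\,K_\infty}{m^{\ell r}}\sup_i\big|g_{\ell,i}^{[r]}\big|, \]
hence $\sup_i|g_{\ell,i}^{[r]}|\le \frac{m^{\ell r}}{r!\,K_\infty}\|\nabla_h^{r+1}f\|_{L^\infty}$. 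This is precisely the reversal of the usual estimate, in which one bounds the limit function by the data; it is the step that genuinely needs the stable integer translates, and I expect it to be the main obstacle.

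It remains to estimate the finite difference of $f$. Since $f\in C^{q+\alpha}$ and $r\ge q$, I would factor $\nabla_h^{r+1}=\nabla_h^{r-q}\nabla_h^{q+1}$, write $\nabla_h^q f$ as a $q$-fold integral of $f^{(q)}$ over a cube of side $h$, and estimate the extra backward difference using the $\alpha$-Hölder continuity of $f^{(q)}$ to get $\|\nabla_h^{q+1}f\|_{L^\infty}\le C h^{q+\alpha}$; the leftover $\nabla_h^{r-q}$ costs only a harmless factor $2^{r-q}$. With $h=m^{-\ell}$ this yields $\|\nabla_h^{r+1}f\|_{L^\infty}\le C' m^{-\ell(q+\alpha)}$, and combining with the stability bound gives $\sup_i|g_{\ell,i}^{[r]}|\le K m^{\ell(r-q-\alpha)}$, as claimed. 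The condition $r\ge q$ is exactly what lets the $(r+1)$-th difference detect the full Hölder exponent $q+\alpha$; a secondary point requiring care is to confirm the cardinal representation in the chosen (standard) parametrization, a nonzero shift $\tau$ merely translating the argument of $\phi$ without affecting any of the estimates.
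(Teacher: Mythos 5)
Your proposal is correct and follows essentially the same route as the paper: represent $f$ at level $\ell$ via the cardinal function, transfer differences of the coefficients to differences of $f$, bound the latter by the H\"older regularity of $f^{(q)}$, and invert via the $\ell^\infty$-stability \eqref{eq:stability}. The only (harmless) deviations are notational and organizational --- you use ordinary backward differences where the paper uses divided differences with the mean value theorem, and you apply stability directly at order $r$ after absorbing the extra $r-q$ differences with the factor $2^{r-q}$ on the continuous side, whereas the paper applies stability at order $q$ and then lifts from $q$ to $r$ through the discrete recursion \eqref{eq:recurse}.
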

\begin{proof}
The limit function for general initial data $f_{0,k}$ can be expressed as the
linear combination
\[ f(x) = \sum_k f_{0,k} \phi(x-k). \]
As is well known \cite{Han.Jia98}, $\phi$ satisfies the refinement equation
\begin{equation}\label{eq:multiscale}
 \phi(x) = \sum_j a_j \phi(mx - j),
\end{equation}
and therefore, for any $\ell \ge 0$,
\begin{equation}\label{eq:useful}
 f(x) = \sum_k f_{\ell,k} \phi(m^\ell x - k).
\end{equation}
We can use this equation to relate any divided difference
of $f$ of the form
\[ \tilde f_{\ell, y}^{[q]} := [m^{-\ell}(y-q),m^{-\ell}(y-q+1),\ldots,m^{-\ell}y]f,\qquad y\in \RR, \]
to the divided differences of the scheme.
Putting $x = m^{-\ell}(y-j)$ in \eqref{eq:useful},
\[ f\big(m^{-\ell}(y-j)\big) = \sum_k f_{\ell,k-j} \phi(y-k), \]
and, using the cases $j=0,1,\ldots,q$, and
the linearity of divided differences,
\[ \tilde f_{\ell,y}^{[q]} = \sum_k f_{\ell,k}^{[q]} \phi(y-k). \]
Similarly, if
\[ \tilde g_{\ell,y}^{[q]} := \tilde f_{\ell,y}^{[q]} - \tilde f_{\ell,y-1}^{[q]},\]
then
\[ \tilde g_{\ell,y}^{[q]} = \sum_k g_{\ell,k}^{[q]} \phi(y-k). \]

Using that $f$ has compact support, if $f$ has regularity $q + \alpha$, there is a constant $K'>0$ such that for any $\xi_0,\xi_1 \in \RR$,
\[ |f^{(q)}(\xi_1) - f^{(q)}(\xi_0)| \le K' |\xi_1 - \xi_0|^\alpha, \]
and, by the mean value theorem for divided differences, for each $\ell$ and $y$,
\[ |\tilde g_{\ell,y}^{[q]}| = |f^{(q)}(\xi_1) - f^{(q)}(\xi_0)| / q!,\]
for $\xi_0,\xi_1 \in \big(m^{-\ell}(y-q-1),m^{-\ell}y\big)$. Therefore, for any $y$,
\[ |\tilde g_{\ell,y}^{[q]}| \le K'' m^{-\ell\alpha}, \]
where $K'' = K' (q+1)^\alpha / q!$. Therefore,
\[ \left\| \sum_\ell g_{\ell,k}^{[q]} 
        \phi(\cdot - k) \right\|_{L^\infty(\RR)} 
    \le K'' m^{-\ell\alpha},\]
and by \eqref{eq:stability} it follows that for any $k \in \ZZ$,
\[ |g_{\ell,k}^{[q]}| \le K^{-1}_\infty K'' m^{-\ell\alpha}. \]
Finally, by applying the divided difference definitions
\eqref{eq:recurse} recursively, $r-q$ times, we obtain \eqref{eq:growthopt}.
\end{proof}

\begin{lemma}\label{lem:stable2}
If $\phi$ has $\ell^\infty$-stable integer translates, then the lower bound $r - \log_m(\rho)$ of Theorem~\ref{thm:MainTheoremLower} is optimal.
\end{lemma}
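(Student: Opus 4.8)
The plan is to prove the contrapositive: I will show that the cardinal limit function $\phi$ (obtained from $\bff_0 = \bfdelta$) fails to lie in $C^\beta$ for every $\beta > r - \log_m(\rho)$. Since $\phi$ is itself a limit function of the scheme, this shows that the lower bound of Theorem~\ref{thm:MainTheoremLower} cannot be improved. Write $\beta = q + \alpha$ with $q\in\NN_0$ and $0 < \alpha < 1$; because $\rho > 1/m$ forces $r - \log_m(\rho) < r + 1$, I may choose $\beta$ just above $r - \log_m(\rho)$ with $q \le r$, so that Lemma~\ref{lem:stable} applies with the fixed generation degree $r$. Suppose, for contradiction, that $\phi \in C^{q+\alpha}$. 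Since $\phi$ has $\ell^\infty$-stable integer translates by hypothesis, Lemma~\ref{lem:stable} yields a constant $K$ with $\max_j |g_{\ell,j}^{[r]}| \le K m^{\ell(r - q - \alpha)}$, where the $g_{\ell,j}^{[r]}$ are the differences of divided differences generated by the cardinal data.

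It remains to produce a matching lower bound on $\max_j |g_{\ell,j}^{[r]}|$ that grows like $\rho^\ell$. For the cardinal data one computes $g_0^{[r]}(z) = (1-z)^{r+1}/r!$, so that, in the notation of \eqref{eq:Gj},
\[ g_\ell^{[r]}(z) = b_\ell(z)\,\frac{(1 - z^{m^\ell})^{r+1}}{r!}, \qquad\text{equivalently}\qquad b_\ell(z) = \frac{r!\,g_\ell^{[r]}(z)}{(1 - z^{m^\ell})^{r+1}}. \]
Expanding $(1 - z^{m^\ell})^{-(r+1)} = \sum_{t \ge 0} \binom{t+r}{r} z^{t m^\ell}$ gives $b_{\ell,0} = r! \sum_{t \ge 0} \binom{t+r}{r} g_{\ell,-t m^\ell}^{[r]}$. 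The key observation is that only boundedly many terms contribute: the support of $b_\ell$ is $[-p\tfrac{m^\ell - 1}{m-1}, p\tfrac{m^\ell - 1}{m-1}]$, of width $O(m^\ell)$, hence so is that of $g_\ell^{[r]}$, while the relevant indices $-t m^\ell$ are spaced by $m^\ell$; thus the number of nonzero summands, together with the binomial weights $\binom{t+r}{r}$ involved, is bounded by a constant $C'$ independent of $\ell$. Consequently $b_{\ell,0} \le C' \max_j |g_{\ell,j}^{[r]}|$.

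Combining the two estimates and invoking Lemma~\ref{lem:rioul} gives $b_{\ell,0} \le C' K\, m^{\ell(r - q - \alpha)}$, so that by \eqref{eq:bj0lim},
\[ \rho = \lim_{\ell\to\infty} b_{\ell,0}^{1/\ell} \le m^{\,r - q - \alpha}, \qquad\text{that is,}\qquad q + \alpha \le r - \log_m(\rho), \]
contradicting $\beta = q + \alpha > r - \log_m(\rho)$. Hence $\phi \notin C^\beta$ for every such $\beta$, and the lower bound of Theorem~\ref{thm:MainTheoremLower} is optimal.

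I expect the main obstacle to be the reverse estimate $b_{\ell,0} \le C' \max_j |g_{\ell,j}^{[r]}|$: one must argue carefully that inverting the spreading operator $(1 - z^{m^\ell})^{r+1}$ reintroduces only a bounded number of coefficients of $g_\ell^{[r]}$ into $b_{\ell,0}$. This hinges on the coincidence that both the support width of $b_\ell$ and the spacing $m^\ell$ of the contributing indices scale like $m^\ell$, so that their ratio, and hence the summand count, stays bounded as $\ell \to \infty$; the remaining steps are bookkeeping with the growth rate \eqref{eq:bj0lim} and the definition of scheme regularity.
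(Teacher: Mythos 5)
Your proof is correct, and while it shares the outer skeleton of the paper's argument (assume $\phi\in C^{q+\alpha}$ with $q+\alpha>r-\log_m(\rho)$, apply Lemma~\ref{lem:stable}, and contradict the limit \eqref{eq:bj0lim}), it differs genuinely at the key step of relating the differences $g^{[r]}_{\ell,j}$ back to $b_{\ell,0}$. The paper sidesteps this by choosing special initial data with $g^{[r]}_{0,j}=\delta_{j,0}$ on the window $-p+1\le j\le p-1$, so that \eqref{eq:gb} gives $g^{[r]}_{\ell,0}=b_{\ell,0}$ exactly; this requires asserting that such initial data exist (essentially integrating the delta $r+1$ times) and works with a limit function other than $\phi$. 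You instead stay with the cardinal data, where $g^{[r]}_0(z)=(1-z)^{r+1}/r!$, and deconvolve: expanding $(1-z^{m^\ell})^{-(r+1)}$ and observing that the lower end of the support of $b_\ell$ is at $-p\frac{m^\ell-1}{m-1}$ while the contributing indices $-tm^\ell$ are spaced $m^\ell$ apart, only $t<p/(m-1)$ terms survive, giving $b_{\ell,0}\le C'\max_j|g^{[r]}_{\ell,j}|$ with $C'$ independent of $\ell$ --- which is all the contradiction needs. Your route buys a cleaner choice of data (the cardinal function itself, whose compact support is immediate) at the cost of the support-counting argument; the paper's buys an exact identity $g^{[r]}_{\ell,0}=b_{\ell,0}$ at the cost of an existence claim about the initial data. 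Both are valid; your reliance on Lemma~\ref{lem:rioul} in the final estimate is not actually needed (the inequality $b_{\ell,0}\le C'\max_j|g^{[r]}_{\ell,j}|$ already suffices), but this is harmless.
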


\begin{proof}
Let $f$ be the limit of the scheme with any initial data for which $g_{0,j}^{[r]} = \delta_{j,0}$, $-p+1 \le j \le p-1$, and with only a finite number of initial data $f_{0,j}$ non-zero. Then $f$ has compact support.
Suppose that $f \in C^{r-\log_m(\rho) + \varepsilon}$ for some small $\varepsilon > 0$ and write the exponent as
\[ r-\log_m(\rho) + \varepsilon = q + \alpha, \qquad q \in \NN_0, \qquad 0 < \alpha < 1.\]
If $\rho > 1/m$, we have $r \ge q$, and so Lemma~\ref{lem:stable} can be applied, implying
\[ |g_{\ell,j}^{[r]}| \le K m^{\ell(\log_m(\rho) - \varepsilon)}
      = K \rho^\ell m^{-\ell\varepsilon}. \]
Hence,
\[ \limsup_{\ell\to \infty} \left|g_{\ell,0}^{[r]}\right|^{1/\ell}  \le \rho m^{-\varepsilon}. \]
By choice of the $g_{0,j}^{[r]}$, however, $g_{\ell,0}^{[r]} = b_{\ell,0}$, which contradicts \eqref{eq:bj0lim}.
\end{proof}

Using this lemma we can now prove Theorem~\ref{thm:MainTheoremStrict} by comparing the cardinal
function $\phi$ with B-splines, which are known to be stable. A similar idea was used by Dong and Shen \cite[Lemma 2.2]{Dong.Shen06} to show that binary pseudo-splines are stable.

\begin{proof}[Proof of Theorem \ref{thm:MainTheoremStrict}]
By Lemma~\ref{lem:stable2}, it is sufficient to show that $\phi$ has $\ell^\infty$-stable integer translates if $B(\xi) > 0$ for all $\xi$. We apply some results by Jia and Micchelli \cite{Jia.Micchelli90}. Consider the (continuous) Fourier transform of $\phi$, defined as
\[ \wphi(\xi) := \int_{\RR} \phi(x) \rme^{-i\xi x} \, \rmd x, \qquad \xi \in \RR. \]
Since the scheme \eqref{eq:scheme} reproduces constants,
\[ \sum_k \phi(x-k) = 1, \qquad x \in \RR. \]
As a $1$-periodic function, it has a Fourier series expansion
\[ \sum_k \phi(x-k) = \sum_{n\in \ZZ} c_n \rme^{2\pi i n x}, \]
with Fourier coefficients
\[
\delta_{n,0}
= c_n
= \int_0^1 \sum_k \phi(x-k) \rme^{-2\pi i n x} \rmd x
= \int_\RR \phi(x) \rme^{-2\pi i n x} \rmd x = \wphi(2\pi n).
\]
In particular $\wphi(0) = 1$. Together with the Fourier transform of \eqref{eq:multiscale},
\[ \wphi(\xi) =  m^{-1} A(\xi/m) \wphi(\xi/m), \]
if follows that
\[ \wphi(\xi) = \prod_{\ell=1}^\infty \big(m^{-1} A(\xi/m^\ell)\big). \]

By \cite[Theorem 3.5]{Jia.Micchelli90}, $\phi$ has $\ell^\infty$-stable integer translates precisely when 
\begin{equation}\label{eq:sup}
 \sup_{k \in \ZZ} \left|\wphi(\xi + 2 \pi k)\right| > 0,
 \qquad \hbox{for all } \xi \in \RR.
\end{equation}
Consider again the case that the scheme admits a factorization \eqref{eq:polynomialgeneration2}. Then
\[ A(\xi) = m \rme^{-(m-1)(r+1)i\xi/2} \left(\frac{\sin(m\xi/2)}{m\sin(\xi/2)}\right)^{r+1}B(\xi), \]
where, since $A(0) = m$ under the assumption of convergence, $B(0) = 1$.
For the B-spline scheme of degree $r$ we have $b(z) = 1$, in which case we can write its symbol as $a_r(z) = (1+z+\cdots + z^{m-1})^{r+1} / m^r$.
The cardinal function $\phi_r$ is the B-spline of degree $r$ centered at 0, and we have, after shifting,
\begin{align*}
\rme^{(r+1)i\xi/2} \cdot \wphi_r(\xi)
& = \prod_{\ell=1}^\infty \left(\frac{\sin(m^{-\ell+1}\xi/2 )}{m\sin(m^{-\ell} \xi/2 )} \right)^{r+1} \\
& = \left(\frac{\sin(\xi/2)}{\xi/2}\right)^{r+1} \lim_{\ell\to \infty} \left(\frac{m^{-\ell} \xi/2}{\sin(m^{-\ell} \xi/2 )}\right)^{r+1}\\
& = \left(\frac{\sin(\xi/2)}{\xi/2}\right)^{r+1}
  =: \sinc^{r+1}(\xi/2).
\end{align*}
It then follows that
\[ \wphi(\xi) = \wphi_r(\xi)
      \prod_{\ell=1}^\infty B\big(\xi/m^\ell\big). \]
Since the condition \eqref{eq:sup} holds for the B-spline $\phi_r$, we deduce that $\phi$ has $\ell^\infty$-stable integer translates if $B(\xi) > 0$ for all $\xi$.
\end{proof}
\end{document}